\newtheorem{theorem}{Theorem}[section]
\newtheorem{proposition}[theorem]{Proposition}
\newtheorem{corollary}[theorem]{Corollary}
\newtheorem{lemma}[theorem]{Lemma}
\theoremstyle{definition}
\newtheorem{definition}[theorem]{Definition}
\newtheorem{property}[theorem]{Property}
\newtheorem{example}[theorem]{Example}
\theoremstyle{remark}
\newtheorem{remark}[theorem]{Remark}
\newtheorem{theorem}{Théorème}[section]
\newtheorem{proposition}[theorem]{Proposition}
\newtheorem{lemma}[theorem]{Lemme}
\theoremstyle{definition}
\theoremstyle{remark}
\newcommand{\mtc}{\mathcal}
\newcommand{\mbf}{\mathbf}
\newcommand{\wt}[1]{{\widetilde{#1}}}
\newcommand{\wh}[1]{{\widehat{#1}}}
\newcommand{\paren}[1]{\left(#1\right)}
\newcommand{\brac}[1]{\left[#1\right]}
\newcommand{\inner}[1]{\left\langle#1\right\rangle}
\newcommand{\norm}[1]{\left\|#1\right\|}
\newcommand{\set}[1]{\left\{#1\right\}}
\newcommand{\abs}[1]{\left\lvert #1 \right\rvert}
\newcommand{\e}[1]{\mbe\brac{#1}}
\newcommand{\ee}[2]{\mbe_{#1}\brac{#2}}
\newcommand{\prob}[1]{\mbp\brac{#1}}
\def\argmin{\mathop{\rm Arg\,Min}\limits}
\newcommand{\eps}{\varepsilon}
\def\cH{{\mtc{H}}}
\def\cK{{\mtc{K}}}
\def\cL{{\mtc{L}}}
\def\cN{{\mtc{N}}}
\def\cP{{\mtc{P}}}
\def\cX{\mathcal{X}}
\newcommand{\mbe}{\mathbb{E}}
\newcommand{\mbr}{\mathbb{R}}
\newcommand{\mbp}{\mathbb{P}}
\newcommand{\bY}{{\mbf{Y}}}
\newcommand{\Y}{\mbf{Y}}
\newcommand{\tr}{\mathrm{tr}}
\title{Optimal learning rates for\\Kernel Conjugate Gradient regression}
\author{
Gilles Blanchard \quad \quad Nicole Kr\"amer\\Weierstrass Institute for\\Applied Analysis and Stochastics\\
Mohrenstr. 39, 10117 Berlin, Germany\\ \texttt{$\{$gilles.blanchard;nicole.kraemer$\}$@wias-berlin.de}
}
\newcommand{\blambda}{\wt{\lambda}}
\begin{document}

\maketitle
\begin{abstract}
  We prove rates of convergence in the statistical sense for kernel-based
least squares regression using a conjugate gradient algorithm,
where regularization against overfitting is obtained by early stopping.
This method is directly related to Kernel Partial Least Squares, a
 regression method that combines supervised dimensionality reduction with least squares projection.
The rates depend on two key quantities: first, on the regularity of the target
regression function and second, on the intrinsic dimensionality of the data
mapped into the kernel space. Lower bounds
on attainable rates depending on these two quantities were established in
earlier literature, and we obtain upper bounds for the considered method
that match these lower bounds (up to a log factor) if  the
true regression function belongs to the reproducing kernel Hilbert space. If
this assumption is not fulfilled, we obtain similar convergence rates
provided additional unlabeled data are available. The order of the learning rates match state-of-the-art results that were recently obtained for least squares
support vector machines and for linear regularization operators.
\end{abstract}

\section{Introduction}
The contribution of this paper is the learning theoretical analysis of kernel-based least squares regression in combination with conjugate gradient techniques. The goal is to estimate  a regression function $f^*$ based
on random noisy observations. We
have an i.i.d.~sample of $n$ observations $(X_i,Y_i)\in \mathcal{X} \times
\mbr$ from an unknown distribution $P(X,Y)$ that follows the model
\begin{eqnarray*}
\label{eq:reg}
Y&=&f^*(X) + \varepsilon\,,
\end{eqnarray*}
where
$\varepsilon$ is a noise variable whose distribution can possibly depend on
$X$, but satisfies $\e{\eps|X}=0$.
We assume that the true regression function $f^*$ belongs to the space
$\mathcal{L}_2(P_X)$ of square-integrable functions. Following the kernelization principle,
we implicitly map the data into a reproducing kernel Hilbert space $\cH$ with a kernel
$k$. We denote by  $K_n=\frac{1}{n}(k(X_i,X_j))\in \mathbb{R}^{n\times n}$
the normalized  kernel
matrix and by $\Y=(Y_1,\ldots,Y_n)^\top \in \mathbb{R}^n$  the $n$-vector of response observations. The task is to find coefficients $\alpha$
such that the function defined by the normalized kernel expansion
\begin{eqnarray*}
\label{eq:kernel_exp}
f_\alpha(X)&=&\frac{1}{n}\sum_{i=1} ^n \alpha_{i} k(X_i,X)
\end{eqnarray*}
is an adequate estimator of the true regression function $f^*$.
The closeness of the estimator $f_\alpha$ to the target $f^*$ is measured
via the $\mathcal{L}_2(P_X)$ distance,
\begin{eqnarray*}
\norm{f_\alpha - f^*}^2_{2} &=& \ee{X\sim P_X}{(f_\alpha(X)-f^*(X))^2}\\
&=&
\ee{XY}{(f_\alpha(X)-Y)^2} - \ee{XY}{(f^*(X)-Y)^2}\,,
\end{eqnarray*}
The last equality recalls that this criterion
is the same as the excess generalization error for the squared error loss
$\ell(f,x,y)=(f(x)-y)^2$.

In empirical risk minimization, we use the training data empirical distribution as a proxy for the
generating distribution, and minimize the
 {\em training} squared error. This gives rise to the linear equation
\begin{equation}
\label{eq:kernel_ls}
K_n \alpha =\Y \qquad \text{ with } \alpha \in \mathbb{R}^n\,.
\end{equation}
Assuming $K_n$ invertible, the solution of the above equation is
given by $\alpha = K_n^{-1}\Y$, which yields
a function in $\cH$ interpolating perfectly the training data but having poor
generalization error. It is well-known that to avoid overfitting, some form of
regularization is needed.
There is a considerable variety of possible approaches
(see e.g. \cite{Gyoerfi02} for an overview). Perhaps the most well-known one
is
\begin{equation}
\label{eq:tyk}
\alpha = (K_n + \lambda I)^{-1}\Y,
\end{equation}
known alternatively as kernel ridge regression, least squares support vector
machine, or Tikhonov's regularization. A powerful generalization of
this is to consider
\begin{equation}
\label{eq:linfilter}
\alpha = F_\lambda(K_n) \Y,
\end{equation}
where $F_\lambda: \mbr_+ \rightarrow \mbr_+$ is a fixed function
depending on a parameter $\lambda$. The notation
$F_\lambda(K_n)$ is to be interpreted as $F_\lambda$ applied to
each eigenvalue of $K_n$ in its eigen decomposition. Intuitively,
$F_\lambda$ should be a ``regularized'' version of the inverse function $F(x)=x^{-1}$.
This type of regularization, which we refer to as linear
regularization methods, is directly inspired from the theory of
inverse problems. Popular examples include as particular cases kernel Ridge
Regression, Principal components regression and $L_2$-Boosting.
Their application in a learning context
has been studied extensively \cite{BauPerRos07,Bissantz0701,Cap06,CapDeV07,LogRosetal08}.
Results obtained in this framework will serve as a comparison yardstick in the
sequel.

In this paper, we study conjugate gradient (CG) techniques in combination with early
stopping for the regularization of the kernel based learning problem \eqref{eq:kernel_ls}. The principle of CG techniques is to
restrict the learning problem onto a nested set of data-dependent
subspaces, the so-called Krylov subspaces, defined as
\begin{equation}
\label{eq:Km}
\cK_m(\Y,K_n)= \text{span}\left\{\Y,K_n \Y,\ldots,K_n ^{m-1} \Y  \right\}\,.
\end{equation}
Denote by $\inner{.,.}$ the usual euclidean scalar product on $\mbr^n$ rescaled by the factor $n^{-1}$. We define the $K_n$-norm as
$\norm{\alpha}^2_{K_n}= \inner{ \alpha, K_n \alpha}.$
The CG solution after $m$ iterations is formally defined as
\begin{equation}
\label{eq:crit_cg}
\alpha_m= \text{arg}\min_{\alpha \in \cK_m(\Y,K_n)} \|\Y -K_n \alpha\|_{K_n}\,;
\end{equation}
and the number $m$ of CG iterations is the model parameter.
To simplify notation we define $f_m:=f_{\alpha_m}$. In the learning context considered here, regularization corresponds to early stopping. Conjugate gradients have the appealing property that the optimization
criterion \eqref{eq:crit_cg} can be computed by a simple iterative algorithm
that constructs basis vectors $d_1,\ldots,d_m$ of
$\mathcal{K}_m(\Y,K_n)\,$ by using only {\em forward multiplication}
of vectors by the matrix $K_n$. Algorithm \ref{algo:cg} displays the computation
of the CG kernel coefficients $\alpha_m$ defined by \eqref{eq:crit_cg}.
\begin{algorithm}
\caption{Kernel Conjugate Gradient regression}
\begin{algorithmic}
\STATE{{\bf Input} kernel matrix $K_n$, response vector $\Y$,
maximum number of iterations $m$}
\STATE{{\bf Initialization}: $\alpha_0 ={\bf 0}_n;  r_1 = \Y; d_1 = \Y $}
\FOR{$i=1,\ldots,m$}
\STATE{$d_{i}=d_{i}/\|K_n d_i\|_{K_n}$ (normalization of the basis vector)}
\STATE{$\gamma_i = \inner{\Y, K_n d_i}_{K_n}$ (step size)}
\STATE{$\alpha_{i} = \alpha_{i-1} + \gamma_i d_i$ (update)}
\STATE{$r_{i+1} = r_{i} - K_n \gamma_i d_i$ (residuals)}
\STATE{$d_{i+1} = r_{i+1}  - \inner{K_n d_i,r_{i+1}}_{K_n}/\|r_{i+1}\|^2 _{K_n}$ (new basis vector)}
\ENDFOR
\STATE{{\bf Return:} CG kernel coefficients  $\alpha_m$, CG function $f_m=\sum_{i=1} ^n \alpha_{i,m} k(X_i,\cdot)$}
\end{algorithmic}
\label{algo:cg}
\end{algorithm}

The CG approach is also inspired by the theory of inverse problems, but it is not covered by the framework of linear operators defined in \eqref{eq:linfilter}: As we restrict the learning problem onto the Krylov space $\mathcal{K}_m(\Y,K_n)\,$, the  CG coefficients $\alpha_m$ are of the form $\alpha_m=q_m(K_n) \Y$ with $q_m$ a polynomial of degree $\leq m-1$. However, the polynomial $q_m$ is not fixed but depends on $\Y$ as well, making the CG method nonlinear in the sense that the coefficients $\alpha_m$ depend on $\Y$ in a nonlinear fashion.

We remark that in machine learning, conjugate gradient techniques are often used as fast solvers for operator equations, e.g. to obtain the solution for the regularized equation \eqref{eq:tyk}. We stress that in this paper, we study conjugate gradients  as a {\em regularization approach} for kernel based learning, where the regularity is ensured via early stopping. This approach is not new.  As mentioned in the abstract, the algorithm that we study is closely related to
Kernel Partial Least Squares \cite{Rosipal0101}. The latter method also restricts the learning problem onto the Krylov subspace $\mathcal{K}_m(\Y,K_n)$, but it minimizes the euclidean distance $\|\Y - K_n \alpha\|$ instead of the distance $\|\Y - K_n \alpha\|_{K_n}$ defined above\footnote{This is generalized to a CG-l algorithm ($l \in \mathbb{N}_{\geq 0}$) by replacing the $K_n$-norm in \eqref{eq:crit_cg} with the norm
  defined by $K_n ^l$. Corresponding fast iterative algorithms to compute the
  solution exist for all $l$ (see e.g. \cite{Hanke95}).}. Kernel Partial Least Squares has shown competitive performance in benchmark experiences (see e.g \cite{Rosipal0101,Rosipal0301}). Moreover, a similar conjugate gradient approach for non-definite kernels has been proposed and empirically evaluated by Ong et~al \cite{Ong04}. The focus of the current paper is therefore not to
stress the usefulness of CG methods in practical applications (and we refer to the above mentioned references) but to examine its theoretical
convergence properties. In particular, we establish the existence
of early stopping rules that lead to optimal convergence rates. We summarize our main results in the next session.

\section{Main results}
For the presentation of our convergence results, we require suitable assumptions on the learning problem. We first assume that the
kernel space $\cH$ is separable and that the kernel function is measurable. (This assumption is satisfied for all practical situations that we know of.)
Furthermore, for all results, we make the (relatively standard) assumption that the kernel is {\em bounded}:
\begin{equation}
\label{boundedk}
 \qquad \qquad k(x,x) \leq \kappa \text{ for all } x \in
\cX\,.
\end{equation}
We consider -- depending on the result -- one of the following
assumptions on the {\em noise}:
\begin{list}{}{\setlength{\leftmargin}{0cm}}
\item  {\bf (Bounded)} (Bounded $Y$): $|Y| \leq M$ almost surely.
\item {\bf (Bernstein)} (Bernstein condition): $\e{\eps^p | X } \leq (1/2) p! M^p$ almost surely, for all integers $p\geq 2$.
\end{list}
The second assumption is weaker than the first. In particular, the
first assumption implies that not only the noise, but also the target function $f^*$ is
bounded in supremum norm, while the second assumption does not put any additional
restriction on the target function.

The {\em regularity} of the target function $f^*$
is measured in terms of a {\em source condition} as follows. The kernel integral operator is given by
\[
K:\mathcal{L}_2(P_X) \rightarrow \mathcal{L}_2(P_X),\,g \mapsto \int k(.,x) g(x) dP(x)\,.
\]
The {\bf source condition} for the parameter $r \geq 0$ is defined by:
\[
\text{\bf SC($r$)}:
f^* = K^r u \qquad \text{ with } \qquad \norm{u} \leq \kappa^{-r}\rho.
\]
It is a known fact (see, e.g., \cite{cuckersmale}) that if $r \geq 1/2$,
then $f^*$ coincides almost surely with a function belonging to $\cH_k$. Therefore, we
refer to $r\geq 1/2$ as the ``inner case'' and to $r<1/2$ as the ``outer case''.

The regularity of the kernel operator $K$ with respect to the marginal
distribution $P_X$ is measured in terms of the {\bf intrinsic dimensionality}
parameter, defined by the condition
\[
\text{\bf ID($s$)}:  \tr(K(K+\lambda I)^{-1}) \leq D^2 (\kappa^{-1} \lambda)^{-s}
\text{ for all } \lambda \in (0,1].
\]

It is known that the best attainable rates of convergence, as a function of the number of
examples $n$, is determined by the parameters $r$ and $s$.
It was shown in \cite{Vito0601} that the minimax learning rate given
these two parameters is lower bounded by
$\mtc{O}(n^{-2r/(2r+s)})$.

We now expose our main results
in different situations. In all the cases considered, the early stopping rule takes the
form of a so-called {\bf discrepancy stopping rule:} For some sequence of thresholds $\Lambda_m$ to be specified (and possibly depending on the data), define the (data-dependent) stopping iteration $\wh{m}$ as the first iteration $m$
for which
\begin{equation}
\norm{\left(f_m(X_1),\ldots,f_m(X_n)\right)^\top - \Y}_{K_n} < \Lambda_m\,.
\end{equation}
(Only in the first result below, the threshold $\Lambda_m$ actually depends on the iteration $m$ and on the data.)

\subsection{Inner case without knowledge on intrinsic dimension}
\label{se:innerwos}
The inner case corresponds to $r\geq 1/2$, i.e. the target function $f^*$ lies in $\mathcal{H}$ almost surely.
For some constants $\tau>1$ and $1>\gamma>0$, we consider the discrepancy stopping rule with the threshold sequence
\begin{equation}
\label{eq:disc1}
\Lambda_m = 4\tau \sqrt{\frac{\kappa \log (2\gamma^{-1})}{n}} \paren{ \sqrt{\kappa} \norm{\alpha_{m}}_{K_n} + M \sqrt{\log (2\gamma^{-1})}}.
\end{equation}
For technical reasons,
we consider a slight variation of the rule in that
we stop at step $\wh{m}-1$ instead of $\wh{m}$
if $q_{\wh{m}}(0) \geq 4 \kappa \sqrt{\log (2\gamma^{-1})/n}$,
where $q_m$ is the iteration polynomial such that $\alpha_m = q_m(K_n) \Y$. Denote
$\wt{m}$ the resulting stopping step.
We obtain the following result.
\begin{theorem}\label{thm:inner1}
Suppose that $Y$ is bounded  {\bf (Bounded)}, and that the source condition
{\bf SC}($r$) holds for $r\geq 1/2$.
With probability $1-2\gamma$\,, the estimator $f_{\wt{m}}$
obtained by the (modified) discrepancy stopping rule \eqref{eq:disc1}
satisfies
\[
\norm{f_{\wt{m}}-f^*}^2_2 \leq c(r,\tau)
(M+\rho)^2 \paren{  \frac{\log^2 \gamma^{-1}}{n}}^{\frac{2r}{2r+1}}\,.
\]
\end{theorem}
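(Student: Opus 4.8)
The plan is to transport the classical analysis of conjugate gradient regularization for linear ill-posed problems (Nemirovskii; Hanke, \cite{Hanke95}) to the random-design setting, and to glue it to the by-now standard probabilistic perturbation estimates used for spectral regularization in kernel learning \cite{CapDeV07,Cap06,BauPerRos07}.

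\textbf{Step 1: operator reformulation and error splitting.} Let $S_n\colon\cH\to\mbr^n$ be the normalized sampling operator, $(S_nf)_i=n^{-1/2}f(X_i)$, so that $S_n^{*}S_n=:\Sigma_n$ is the empirical covariance operator on $\cH$, $S_nS_n^{*}=K_n$, and the population covariance $\Sigma=S_P^{*}S_P$ (with $S_P$ the inclusion $\cH\hookrightarrow\cL_2(P_X)$) shares its non-zero spectrum with $K$. In the inner case $r\geq1/2$ one has $f^{*}\in\cH$ a.s., and SC($r$) rewrites, via the isometry $K^{1/2}\simeq S_P$, as $f^{*}=\Sigma^{r-1/2}v$ with $\norm{v}\leq\kappa^{-r}\rho$; moreover $\Y=\sqrt n\,S_nf^{*}+\xi$, $\xi=(\eps_1,\dots,\eps_n)^{\top}$. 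Since $\alpha_m=q_m(K_n)\Y$, the CG function is $f_m=n^{-1/2}S_n^{*}\alpha_m=q_m(\Sigma_n)\Sigma_n f^{*}+n^{-1/2}q_m(\Sigma_n)S_n^{*}\xi$; writing $r_m(t):=1-tq_m(t)$ (so $r_m(0)=1$, $\deg r_m\leq m$) and using $\norm{f_m-f^{*}}_2=\norm{\Sigma^{1/2}(f_m-f^{*})}_{\cH}$ we get
\[
\norm{f_m-f^{*}}_2\ \leq\ \underbrace{\norm{\Sigma^{1/2}r_m(\Sigma_n)\Sigma^{r-1/2}v}}_{\textrm{approximation term}}\ +\ \underbrace{\norm{\Sigma^{1/2}q_m(\Sigma_n)S_n^{*}\xi/\sqrt n}}_{\textrm{noise term}}.
\]

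\textbf{Step 2: the probabilistic event.} Fix $\lambda=\lambda_n:=(\log^2\gamma^{-1}/n)^{1/(2r+1)}$ and work on an event $\cA$ of probability $\geq1-2\gamma$ on which: (i) $\norm{(\Sigma+\lambda)^{-1/2}(\Sigma-\Sigma_n)(\Sigma+\lambda)^{-1/2}}\leq\tfrac12$, so that $\Sigma+\lambda$ and $\Sigma_n+\lambda$ are equivalent up to a factor $2$, which (by the Cordes/Heinz inequality, iterated if $r>3/2$) controls $\norm{\Sigma^{a}(\Sigma_n+\lambda)^{-a}}$ and $\norm{(\Sigma_n+\lambda)^{a}\Sigma^{-a}}$; (ii) $\norm{(\Sigma_n+\lambda)^{-1/2}S_n^{*}\xi/\sqrt n}\leq c\,M\sqrt{\log\gamma^{-1}}\,\sqrt{\tr(K(K+\lambda)^{-1})/n}$; (iii) the threshold $\Lambda_m$ of \eqref{eq:disc1} dominates the genuine stochastic part of the residual, $\norm{S_n^{*}(\Y-\sqrt n S_n f^{*})/\sqrt n}$ in the relevant $(\Sigma_n+\lambda)$-weighted norm, as well as the $q_m(0)$-type remainders used in Step 3. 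Because nothing is assumed on the intrinsic dimension, we use only the crude bound $\tr(K(K+\lambda)^{-1})\leq\tr(K)/\lambda\leq\kappa/\lambda$, i.e. ID($s$) in the worst case $s=1$; this is exactly what fixes the exponent $2r/(2r+1)$. All three bounds are Bernstein-type concentration inequalities for Hilbert-space-valued variables and may be taken essentially verbatim from \cite{CapDeV07,Cap06,BauPerRos07}.

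\textbf{Step 3: the stopping rule (the crux).} On $\cA$ one must show that the (modified) discrepancy rule stops at a step $\wt m$ for which $r_{\wt m}$ behaves, on the spectrum of $\Sigma_n$, like a regularization filter at scale $\lambda$ with qualification at least $r$. Three features of CG drive this: the discrepancy $\rho_m:=\norm{r_m(K_n)\Y}_{K_n}$ is nonincreasing; by \eqref{eq:crit_cg}, $\rho_m=\min\set{\norm{r(K_n)\Y}_{K_n}:r(0)=1,\ \deg r\leq m}$; and for $m<\wh m$ (below the stopping set) $\rho_m$ is bounded \emph{below} by essentially the noise level, which forbids stopping too late. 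Comparing the CG residual with an explicit polynomial of degree $\approx\lambda^{-1}$ mimicking a spectral filter at scale $\lambda$ bounds $\wt m$ from above; combining this with $\rho_{\wt m}\leq\Lambda_{\wt m}$ and the lower bound at step $\wt m-1$ should yield the two deterministic operator inequalities we actually need, valid on $\cA$:
\[
\norm{(\Sigma_n+\lambda)^{r}r_{\wt m}(\Sigma_n)}\ \leq\ c(r,\tau)\,\lambda^{r},
\qquad
\norm{(\Sigma_n+\lambda)^{1/2}q_{\wt m}(\Sigma_n)(\Sigma_n+\lambda)^{1/2}}\ \leq\ c(r,\tau).
\]
This is the main obstacle: unlike for a fixed linear regularization operator, $r_{\wt m}$ and $q_{\wt m}$ depend on the data, so no prescribed filter qualification is available; one must chain the variational characterization of CG, monotonicity of $\rho_m$, the no-overfitting lower bound, and -- crucially -- the bound $q_{\wt m}(0)\leq 4\kappa\sqrt{\log(2\gamma^{-1})/n}$ forced by the technical modification of the rule, which is precisely the device that prevents the iteration polynomial from exploding near the origin. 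Making the CG-for-ill-posed-problems estimates of \cite{Hanke95} go through with the random operator $\Sigma_n$ in place of a deterministic one, while tracking which constants depend only on $r$ and $\tau$, is where the real work sits. (That CG has arbitrarily high qualification is what lets the argument cover all $r\geq1/2$, not just $r\leq1$.)

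\textbf{Step 4: assembling the rate.} Insert the bounds of Step 3 into the splitting of Step 1, replacing the $\Sigma^{a}$-factors by $(\Sigma_n+\lambda)^{a}$-factors at the cost of the quantities controlled in Step 2(i). For the approximation term,
\[
\norm{\Sigma^{1/2}r_{\wt m}(\Sigma_n)\Sigma^{r-1/2}v}\leq\norm{\Sigma^{1/2}(\Sigma_n+\lambda)^{-1/2}}\,\norm{(\Sigma_n+\lambda)^{r}r_{\wt m}(\Sigma_n)}\,\norm{(\Sigma_n+\lambda)^{1/2-r}\Sigma^{r-1/2}}\,\norm{v}\leq c(r,\tau)\,\lambda^{r}\rho,
\]
and for the noise term,
\[
\norm{\Sigma^{1/2}q_{\wt m}(\Sigma_n)S_n^{*}\xi/\sqrt n}\leq\norm{\Sigma^{1/2}(\Sigma_n+\lambda)^{-1/2}}\,\norm{(\Sigma_n+\lambda)^{1/2}q_{\wt m}(\Sigma_n)(\Sigma_n+\lambda)^{1/2}}\,\norm{(\Sigma_n+\lambda)^{-1/2}S_n^{*}\xi/\sqrt n}\leq c(r,\tau)\,M\sqrt{\tfrac{\kappa\log\gamma^{-1}}{n\lambda}}.
\]
With $\lambda=\lambda_n$ one has $\lambda^{r}=(\log^2\gamma^{-1}/n)^{r/(2r+1)}$, and $\sqrt{\kappa/(n\lambda)}$ is of the same order in $n$ with a smaller power of $\log\gamma^{-1}$; hence both terms are at most $c(r,\tau)(M+\rho)(\log^2\gamma^{-1}/n)^{r/(2r+1)}$, and squaring yields the claimed bound. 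The only leftover bookkeeping is to check that the powers of $\log\gamma^{-1}$ entering through $\Lambda_m$ and through Step 2 are, after squaring, never larger than $2r/(2r+1)$, which follows from $r\geq1/2$.
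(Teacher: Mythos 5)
Your proposal has a genuine gap at its core, and it sits exactly where you say "this is the main obstacle ... where the real work sits": Step 3 is not an argument but a placeholder, and the two operator inequalities you propose to establish there are, as stated, not true for conjugate gradients. You ask for $\norm{(\Sigma_n+\lambda)^{r}r_{\wt m}(\Sigma_n)}\leq c\,\lambda^{r}$ and $\norm{(\Sigma_n+\lambda)^{1/2}q_{\wt m}(\Sigma_n)(\Sigma_n+\lambda)^{1/2}}\leq c$ as full operator norms; both would force $\abs{r_{\wt m}(x)}\leq c$ on the entire spectrum of $\Sigma_n$. But the CG residual polynomial is an orthogonal polynomial normalized at $0$, and it is bounded by $1$ only on $[0,x_{1,m}]$, below its smallest root; between larger roots its local extrema can be arbitrarily large (already for a spectrum concentrated on two points $a\ll b$, $p_2(x)=(x-a)(x-b)/(ab)$ has a trough of depth $(b-a)^2/(4ab)$). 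This is precisely why the classical CG analysis never uses uniform filter bounds: it splits the spectrum at $x_{1,m}$ with the projector $F_{x_{1,m}}$, uses the convexity bounds $xq_m(x)\leq 1$ and $q_m(x)\leq\abs{p'_m(0)}$ only \emph{below} the first root, and controls the part of the spectrum above $x_{1,m}$ through the orthogonality relations $[p_m,q]_{(1)}=0$ (Hanke's Lemma 3.7) together with the discrepancy value $\norm{T_n^*(T_nf_m-\Y)}$ itself. Your Step 4 therefore rests on inequalities that cannot be supplied, and the chain "variational characterization + monotonicity + no-overfitting lower bound" does not, by itself, convert the data-dependent polynomial $r_{\wt m}$ into a filter with qualification $r$ in operator norm. (A minor additional slip: the technical modification of the rule caps $q_{\wh m}(0)$ from \emph{above} by a quantity of order $\delta^{-1}\sim\sqrt{n/\log\gamma^{-1}}$, not from below as you wrote.)

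It is also worth noting that you chose the harder of the two available routes. The paper proves this particular theorem by a direct reduction to Nemirovskii's deterministic perturbation theorem for CG: identify $\bar A=S_n$, $\bar b=T_n^*\Y$, $A=S$, $z^*=f^*_{\cH}$ with $\mu=r-\tfrac12$, verify $\norm{S_n-S}_{HS}\leq\delta$ and $\norm{T_n^*\Y-Sf^*_{\cH}}\leq\eps$ with $\delta\sim\kappa\sqrt{\log\gamma^{-1}/n}$ and $\eps\sim M\sqrt{\kappa}\log\gamma^{-1}/\sqrt n$ (plain, unwarped concentration, probability $1-2\gamma$), and read off the rate from $\norm{A^{1/2}(z_{\wh m}-z^*)}^2\leq cR^{1/(1+\mu)}(\eps+\delta RL^{\mu})^{(1+2\mu)/(1+\mu)}$ with $\theta=\tfrac12$. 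All of the orthogonal-polynomial machinery you would need to build in Step 3 is already packaged inside Nemirovskii's theorem; the warped-norm architecture you sketch is the one the paper reserves for the sharper Theorem 2.2, and even there the key lemmas are proved via the $F_{x_{1,m}}$-splitting, not via uniform operator-norm filter bounds.
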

We present the proof in Section \ref{sec:proof}.

\subsection{Optimal rates in inner case}\label{subsec:opt_inner}

We now introduce a stopping rule yielding order-optimal convergence rates as
a function of the two parameters $r$ and $s$ in the ``inner'' case ($r\geq 1/2$,
which is equivalent to saying that the target function belongs to $\cH$ almost surely).
For some constant $\tau'>3/2$ and $1>\gamma>0$, we consider the discrepancy stopping rule with the fixed threshold
\begin{equation}
\label{eq:disc2}
\Lambda_m \equiv \Lambda = \tau' M \sqrt{\kappa} \paren{\frac{4D}{\sqrt{n}} \log \frac{6}{\gamma}}^{\frac{2r+1}{2r+s}}\,.
\end{equation}
for which we obtain the following:
\begin{theorem}\label{thm:inner2}
Suppose that the noise fulfills the Bernstein assumption {\bf (Bernstein)},
that the source condition {\bf SC}($r$) holds for $r\geq 1/2$,
and that {\bf ID($s$)} holds. With probability $1-3\gamma$\,, the estimator $f_{\wh{m}}$
obtained by the discrepancy stopping rule \eqref{eq:disc2}
satisfies
\[
\norm{f_{\wh{m}} - f^*}_2^2 \leq
c(r,\tau')(M+\rho)^2\paren{\frac{16D^2}{n}\log^2 \frac{6}{\gamma}}^{\frac{2r}{2r+s}}\,.
\]
\end{theorem}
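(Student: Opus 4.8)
The plan is to follow the standard template for analyzing regularization methods via a discrepancy (Morozov) stopping rule, but adapted to the nonlinearity of conjugate gradients. I would first split the error $\norm{f_{\wh m}-f^*}_2$ into a stochastic (sample) part and an approximation part, measured through the population integral operator $K$ and its empirical counterpart (the kernel matrix $K_n$ viewed as an operator on the span of the feature maps). The key probabilistic ingredients are concentration inequalities: under the Bernstein noise condition, one controls $\norm{(K_n+\lambda)^{-1/2}(\text{empirical residual} - \text{population residual})}$ by a Bernstein-type bound, and one controls the operator distance between $K_n$ and $K$ (e.g.\ $\norm{(K+\lambda)^{-1/2}(K-K_n)(K+\lambda)^{-1/2}}$) using the intrinsic-dimensionality bound \textbf{ID($s$)} to get the sharp dependence on $\lambda$, so that the ``effective dimension'' $\tr(K(K+\lambda)^{-1})$ enters exactly as $(\kappa^{-1}\lambda)^{-s}$. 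These two events, together with a bound on $\norm{\Y - (f^*(X_i))_i}$-type quantities, should hold with probability $1-3\gamma$ (three bad events, one per $\gamma$), which matches the probability stated.

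Second, I would establish the deterministic core of the argument: a bound on the CG error valid on the good event, in terms of the threshold $\Lambda$. The discrepancy principle ensures that $\norm{K_n\alpha_{\wh m} - \Y}_{K_n} < \Lambda$ while $\norm{K_n\alpha_{\wh m -1}-\Y}_{K_n}\geq \Lambda$. For conjugate gradients one exploits the optimality property \eqref{eq:crit_cg} together with the polynomial representation $\alpha_m=q_m(K_n)\Y$: since $q_{\wh m}$ is the minimizer of the residual over degree-$(\le \wh m-1)$ polynomials applied to $\Y$, one gets a comparison with \emph{any} competitor polynomial — in particular with a polynomial approximating the ideal filter at a regularization level $\lambda$ chosen so that $\lambda \asymp \big(\tfrac{D}{\sqrt n}\log\tfrac6\gamma\big)^{2/(2r+s)}$. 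The source condition \textbf{SC($r$)} with $r\ge 1/2$ then gives the approximation error $\lesssim \rho\,\lambda^{r}$, and combining with the stopping-level estimate $\lesssim (M+\rho)\lambda^{r}$ for the stochastic part yields $\norm{f_{\wh m}-f^*}_2^2 \lesssim (M+\rho)^2\lambda^{2r}$, which is the claimed rate $(D^2 n^{-1}\log^2(6/\gamma))^{2r/(2r+s)}$ up to the constant $c(r,\tau')$.

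The main obstacle, and the place where CG differs essentially from linear regularization schemes \eqref{eq:linfilter}, is that the regularizing ``filter'' $q_{\wh m}$ is itself data-dependent and random, so one cannot simply plug in a fixed spectral function and invoke standard inverse-problem estimates. The standard way around this (going back to Nemirovskii's analysis of CG for inverse problems, adapted to the learning setting) is to bound the relevant norms of $q_{\wh m}(K_n)$ and of the residual polynomial $1-K_n q_{\wh m}(K_n)$ \emph{uniformly} over the random stopping index, using only that $\wh m$ was selected by the discrepancy rule; this typically requires the auxiliary control $q_{\wh m}(0) < 4\kappa\sqrt{\log(2\gamma^{-1})/n}$ (the analogue of the technical modification used in Theorem \ref{thm:inner1}) to prevent the iteration polynomial from blowing up at the origin, and a ``balance'' or interpolation inequality relating the residual at scale $\Lambda$ to the error at scale $\lambda$. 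Verifying that the CG optimality transfers cleanly from the empirical $K_n$-geometry to the population $\cL_2(P_X)$-geometry — i.e.\ that the concentration bounds are strong enough to let the random Krylov space behave like its population idealization at the chosen $\lambda$ — is the technical heart of the proof, and I expect it to consume the bulk of Section \ref{sec:proof}.
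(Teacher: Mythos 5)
Your probabilistic layer coincides with the paper's: the actual proof establishes exactly the three ``warped-norm'' events you describe (conditions {\bf B1($\lambda_*$)}, {\bf B2($\lambda_*$)}, {\bf B3} in the supplement), via the Hilbert-space Bernstein inequality of Lemma \ref{prop:relconc} combined with {\bf ID($s$)}, a union bound over the three events giving $1-3\gamma$, the choice $\lambda_* \asymp \kappa\paren{(4D/\sqrt{n})\log(6/\gamma)}^{2/(2r+s)}$, and the observation that the threshold \eqref{eq:disc2} equals $(2+\tau)\lambda_*^{1/2}\delta(\lambda_*)$; the conclusion is then $\norm{f_{\wh m}-f^*}_2 \lesssim (M+\rho)\blambda_*^{r}$, as you predict.

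The gap is in the deterministic core. First, you propose to import the auxiliary control on $q_{\wh m}(0)$ as a modification of the rule, ``the analogue of the technical modification used in Theorem \ref{thm:inner1}''; but Theorem \ref{thm:inner2} is stated for the plain discrepancy rule with the fixed threshold \eqref{eq:disc2}, so assuming that control changes the statement being proved. The paper instead \emph{derives} the required bound $\abs{p'_{\wh m}(0)}=q_{\wh m}(0)\leq c(\mu,\tau)\max(\rho/M,1)\lambda_*^{-1}$ from the stopping rule itself, and this is the decisive mechanism your plan lacks: (i) since the residual at step $\wh m-1$ exceeds the threshold, the warped-norm analogue of Nemirovskii's residual estimate (Lemma \ref{le:lemma1}, resting on Hanke's Lemma 3.7 and the bound \eqref{eq:boundphi} for the functions $\varphi_m$) yields a bound on $\abs{p'_{\wh m-1}(0)}$; (ii) the passage from $\wh m-1$ to $\wh m$ uses the orthogonal-polynomial identity \eqref{eq:pcompar} (Hanke, Cor.~2.6), whose right-hand side $\brac{p_{m-1},p_{m-1}}_{(0)}/\brac{p^{(2)}_{m-1},p^{(2)}_{m-1}}_{(1)}$ is controlled by Lemma \ref{le:relpol}, which exploits the CG minimality against the $[.,.]_{(2)}$-orthogonal sequence together with root interlacing. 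Second, your suggested use of the optimality property \eqref{eq:crit_cg} against a fixed competitor polynomial approximating an ideal filter at level $\lambda$ would at best show that the residual falls below the threshold (hence bound $\wh m$); it does not convert into a bound on $\norm{f_{\wh m}-f^*}_2$, because the data-dependent $q_{\wh m}$ is not a spectral filter with controlled qualification --- the error bound comes from Lemma \ref{le:lemma2}, which requires precisely the bound on $\abs{p'_{\wh m}(0)}$ that your outline leaves unproved. So the architecture is right, but the technical heart you correctly identify is not just laborious: it needs the specific orthogonal-polynomial machinery above, in warped-norm form, which your proposal does not supply.
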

Due to space limitations, the proof is presented in the supplementary material.

\subsection{Optimal rates in outer case, given additional unlabeled data}

We now turn to the ``outer'' case. In this case, we make the additional assumption that
{\em unlabeled} data is available. Assume that we have $\tilde n$ i.i.d. observations $X_1,\ldots,X_{\tilde n}$, out of which only the first $n$ are labeled. We define a new response vector $\tilde \Y=\frac{\tilde n}{n} \left(Y_1,\ldots,Y_n,0,\ldots,0\right)\in \mathbb{R}^{\tilde n}$ and run the CG algorithm \ref{algo:cg} on $X_1,\ldots,X_{\tilde n}$ and $\tilde \Y$. We use the same threshold \eqref{eq:disc2} as in the previous section for the stopping rule, except that the factor
$M$ is replaced by $\max(M,\rho)$.
\begin{theorem}\label{thm:outer1}
Suppose assumptions {\bf (Bounded)},
{\bf SC($r$)} and  {\bf ID($s$)}, with
$r+s \geq \frac{1}{2}$.
Assume unlabeled data is available with
\[
\frac{\wt{n}}{n} \geq \paren{\frac{16D^2}{n} \log^2 \frac{6}{\gamma}}^{-\frac{(1-2r)_+}{2r+s}}.
\]
Then with probability $1-3\gamma$\,, the estimator $f_{\wh{m}}$
obtained by the discrepancy stopping rule defined above satisfies
\[
\norm{f_{\wh{m}} - f^*}_2^2 \leq
c(r,\tau')(M+\rho)^2\paren{\frac{16D^2}{n}\log^2 \frac{6}{\gamma}}^{\frac{2r}{2r+s}}\,.
\]
\end{theorem}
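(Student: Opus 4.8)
The plan is to reduce the outer case to the inner-case analysis of Theorem~\ref{thm:inner2}, exploiting that the unlabeled sample turns the \emph{empirical} integral operator $K_{\tilde n}=\tfrac1{\tilde n}\bigl(k(X_i,X_j)\bigr)_{i,j\le\tilde n}$ into a good proxy for the population operator $K$. The first step is a bookkeeping identity: by the rescaling built into $\tilde\Y$ one has $\tfrac1{\tilde n}\sum_{i\le\tilde n}\tilde Y_i\,k(X_i,\cdot)=\tfrac1n\sum_{i\le n}Y_i\,k(X_i,\cdot)$, so the data vector actually processed by Algorithm~\ref{algo:cg} is the same as in the labeled problem; in particular the stochastic part of the input is an average of $n$ terms and still fluctuates at rate $n^{-1/2}$, which is why the final bound depends on $n$ and not on $\tilde n$, while the ``signal'' part, after one application of $K_{\tilde n}$, concentrates around $Kf^*\in\cH$. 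Thus, following the proof of Theorem~\ref{thm:inner2}, I would split $\norm{f_{\wh m}-f^*}_2^2$ into an approximation (bias) term and a stochastic term, with $K_{\tilde n}$ in the role of $K$ in every operator estimate and $n$ retained as the sample size in every noise estimate.

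The inner-case argument uses $r\ge\tfrac12$ at essentially one place: to know that the noise-free target lies in $\mathrm{Ran}(K^{1/2})=\cH$, so that the bias of the CG iteration --- whose polynomial $q_{\wh m}$ may have a large value $q_{\wh m}(0)$ --- can be controlled. In the outer case this inclusion fails for $K$ but holds \emph{approximately} for $K_{\tilde n}$ on the augmented sample, and the second step is to quantify the defect. I would invoke the Hilbert--Schmidt concentration bound $\norm{K_{\tilde n}-K}=\mtc{O}\bigl(\kappa\,\tilde n^{-1/2}\log\gamma^{-1}\bigr)$ (valid with probability $1-\gamma$ under the bounded-kernel assumption~\eqref{boundedk}, and already part of the machinery behind Theorem~\ref{thm:inner2}) together with operator-perturbation inequalities for the maps $A\mapsto A^r$ and $A\mapsto A(A+\lambda)^{-1}$ to transfer {\bf ID($s$)} and an $\lambda$-regularized form of {\bf SC($r$)} from $K$ to $K_{\tilde n}$. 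This transfer degrades the bias by a term equal to a negative power of the regularization level times $\tilde n^{-1/2}$ (heuristically, one needs $(\tfrac12-r)_+$ extra operator powers to fit $f^*$ into the empirical RKHS, each bought at the price of a negative power of $\lambda$). Inserting the optimal level $\lambda_\ast\asymp\bigl(\tfrac{16D^2}{n}\log^2\tfrac6\gamma\bigr)^{1/(2r+s)}$ from the inner-case proof and requiring this extra term to be of the same order as the target rate $\lambda_\ast^{\,2r}$ produces exactly the stated lower bound on $\tilde n/n$; the structural hypothesis $r+s\ge\tfrac12$ is what makes the balance consistent, so that the required (possibly super-linear in $n$) amount of unlabeled data stays polynomial and the discrepancy stopping analysis carries over. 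Finally, replacing $M$ by $\max(M,\rho)$ in the threshold~\eqref{eq:disc2} is precisely what absorbs the additional bias-type fluctuation into the discrepancy level, so that the data-dependent stopping index $\wh m$ still lands in the window where the inner-case bounds apply.

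The step I expect to be the real obstacle is obtaining the transferred source-condition bound with the \emph{sharp} joint dependence on $\tilde n$ and $\lambda$: the naive estimate $\norm{K_{\tilde n}^r-K^r}\lesssim\norm{K_{\tilde n}-K}^{\min(r,1)}$ is too crude and would force a requirement on $\tilde n$ far larger than the one in the theorem, so the perturbation must be measured in the correctly $\lambda$-weighted norm. Once this estimate is in place, the remainder is a line-by-line repetition of the proof of Theorem~\ref{thm:inner2}, with $K_{\tilde n}$ substituted for $K$ in the operator and effective-dimension bounds and $n$ kept in the concentration bounds for the noise, so I would relegate those routine verifications to the supplement, as the authors do for Theorem~\ref{thm:inner2} itself.
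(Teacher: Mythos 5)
Several of your ingredients are right and do appear in the paper's argument: the identity $T_{\tilde n}^*\wt{\bY}=T_n^*\bY$, the observation that the noise term therefore still fluctuates at rate $n^{-1/2}$ while the operator concentration improves to rate $\tilde n^{-1/2}$, and the insistence that all perturbations be measured in the $\lambda$-weighted (warped) norm. Indeed, the lower bound on $\tilde n/n$ is used in the paper exactly to verify the warped operator bound {\bf B2($\lambda_*$)} via \eqref{eq:reloperror} with $n$ replaced by $\tilde n$, at the level $\lambda_*$ dictated by the number of \emph{labeled} points. (One attribution is off: the hypothesis $r+s\geq\frac12$, i.e.\ $\mu+s\geq 0$, is needed to validate the chain of inequalities defining $\delta(\lambda_*)$ in the noise bound, not to keep the unlabeled-data budget consistent.)

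The genuine gap is the pivot you build the whole argument on: transferring {\bf SC($r$)} from $K$ to the empirical operator via a sharp $\lambda$-weighted bound on $K^r\mapsto K_{\tilde n}^r$ for $r<\frac12$. You correctly identify this as the obstacle, but no such estimate is established (the available fractional-power perturbation bounds are of H\"older type, $\norm{A^r-B^r}\lesssim\norm{A-B}^{\min(r,1)}$, which you rightly call too crude, and it is not clear a usable weighted refinement exists), and the paper never needs one. Instead, the paper sidesteps the empirical source condition entirely: it replaces condition {\bf B1($\lambda$)}, which measures $\norm{(S+\lambda I)^{-\frac12}(T_n^*\bY-S_nf^*_\cH)}$ and only makes sense when $f^*\in\cH$, by {\bf B1'($\lambda$)}, which measures the deviation of $T_n^*\bY$ from the \emph{population} quantity $T^*f^*$. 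The point — which you actually mention in passing but do not exploit — is that $T^*f^*=S^{r+\frac12}w$ always lies in $\cH$ and satisfies a source condition with the strictly positive exponent $\mu+1=r+\frac12$, even when $r<\frac12$; the analogues of Lemmas \ref{le:lemma1}--\ref{le:relpol} are then reworked with $T^*f^*$ as the reference point for the bias terms, and the bounded-noise deviation inequality \eqref{eq:byby} (whence the replacement of $M$ by $\max(M,\rho)$) controls {\bf B1'($\lambda_*$)}. As written, your plan stalls at its self-declared crux, so the reduction to Theorem \ref{thm:inner2} is not complete.
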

A sketch of the proof can be found in the supplementary material.

\section{Discussion and comparison to other results}
For the inner case -- i.e. $f^* \in \mathcal{H}$ almost surely -- we provide two different consistent stopping criteria. The first one (Section \ref{se:innerwos})
is oblivious to the intrinsic dimension parameter $s$, and the obtained bound corresponds to the ``worst case'' with respect to this parameter (that is, $s=1$). However, an interesting feature of stopping rule \eqref{eq:disc1}
is that the rule itself does not depend on the a priori knowledge of the regularity parameter $r$, while the achieved learning rate does
(and with the optimal dependence in $r$ when $s=1$). Hence,  Theorem \ref{thm:inner1} implies that the obtained rule is automatically {\em adaptive} with respect to the regularity of the target function. This contrasts with the results obtained in \cite{BauPerRos07} for
linear regularization schemes of the form \eqref{eq:linfilter},
(also in the case $s=1$) for which the choice of the
regularization parameter $\lambda$ leading to optimal learning rates
required the knowledge or $r$ beforehand.


When taking into account also the intrinsic dimensionality parameter $s$,
Theorem \ref{thm:inner2} provides the order-optimal convergence rate in the inner case (up to a log factor). A noticeable difference to Theorem \ref{thm:inner1} however, is that the stopping rule is no longer adaptive, that is, it depends on the {\em a priori} knowledge of parameters $r$ and $s$. We observe that previously obtained results for linear regularization schemes of the form \eqref{eq:tyk} in \cite{CapDeV07} and of the form \eqref{eq:linfilter} in \cite{Cap06}, also rely on the a priori knowledge
of $r$ and $s$ to determine the appropriate regularization  parameter $\lambda$.


The outer case -- when the target function does not lie in the reproducing Kernel Hilbert space $\mathcal{H}$ -- is more challenging and to some
extent less well understood. The fact that additional assumptions are made is not a particular artefact of CG methods,
but also appears in the studies of other regularization techniques.
Here we follow the semi-supervised approach that is proposed in e.g. \cite{Cap06} (to study linear regularization
of the form \eqref{eq:linfilter}) and assume that we have sufficient additional unlabeled data in order to ensure learning rates
that are optimal as a function of the number of {\em labeled} data.
We remark that other forms of additional requirements can be
found in the recent literature in order to reach optimal rates.
For regularized M-estimation schemes studied in \cite{Steinwart09}, availability of unlabeled data is not required, but a condition is imposed of the form $\norm{f}_{\infty} \leq C \norm{f}_{\cH}^p \norm{f}_2^{1-p}$ for all $f\in \cH$ and some $p \in (0,1]$.
In \cite{MenNee10},
assumptions on the supremum norm of the eigenfunctions of the
kernel integral operator are made (see \cite{Steinwart09} for an in-depth
discussion on this type of assumptions).

Finally, as explained in the introduction, the term 'conjugate gradients' comprises a class of
methods that approximate the solution of linear equations on Krylov
subspaces. In the context of learning, our approach is most closely linked to
Partial Least Squares (PLS) \cite{Wold8401} and its kernel extension
\cite{Rosipal0101}. While PLS has proven to be  successful
in a wide range of applications and is considered one of the standard approaches in chemometrics, there are  only  few studies of its theoretical properties. In \cite{Chun0901,Naik0001}, consistency properties are provided for linear PLS  under the assumption that the target function $f^*$ depends on a finite known number  of orthogonal latent components. These findings were recently extended to the nonlinear case and without the assumption of a latent components model \cite{Blanchard10}, but all results come without optimal rates of convergence. For the slightly different CG approach studied by Ong et al \cite{Ong04}, bounds on the difference between the empirical risks of the CG approximation and of the target function are derived in \cite{Ong05}, but no bounds on the generalization error were derived.
\section{Proofs}\label{sec:proof} %
%
%
Convergence rates for regularization methods of the type
\eqref{eq:tyk} or \eqref{eq:linfilter} have been studied by casting
kernel learning methods into the framework of {\em inverse problems}
(see \cite{Vito0601}).
We use this framework for the present results as well,
and recapitulate here some important facts.

We first define the {\em empirical evaluation operator} $T_n$ as follows:
\[
T_n: \qquad g \in \cH \mapsto T_ng :=(g(X_1),\ldots,g(X_n))^\top \in \mbr^n
\]
and the {\em empirical integral operator} $T_n^*$ as:
\[
T_n^*:  u=(u_1,\ldots,u_n) \in \mbr^n \mapsto
T_n^*u := \frac{1}{n} \sum_{i=1}^n  u_i k(X_i,\cdot) \in \cH.
\]
Using the reproducing property of the kernel, it can be readily
checked that $T_n$ and $T_n^*$ are adjoint operators, i.e. they satisfy
$\inner{T^*_n u,g}_{\cH} = \inner{u,T_ng}$, for all $u \in \mbr^n, g\in \cH$\,. Furthermore,  $K_n=T_nT_n^*$, and therefore
$\norm{\alpha}_{K_n} = \norm{f_\alpha}_{\cH}$. Based on these facts,
equation \eqref{eq:crit_cg} can be rewritten as
\begin{equation}
\label{eq:crit_cg2}
f_{m}= \text{arg}\min_{f \in \cK_m(T_n^*\Y,S_n)} \| T_n^*Y - S_n f\|_{\cH}\,,
\end{equation}
where $S_n=T_n^*T_n$ is a self-adjoint operator of $\cH$, called empirical covariance
operator. This definition corresponds to that of the ``usual'' conjugate gradient algorithm
formally applied to the so-called normal equation (in $\cH$)
\[
S_n f_\alpha = T_n^* \Y\,,
\]
which is obtained from \eqref{eq:kernel_ls} by left multiplication by $T_n^*$. The advantage of this reformulation is that it can be interpreted as a ``perturbation''
of a {\em population, noiseless} version (of the equation and of the algorithm), wherein
$Y$ is replaced by the target function $f^*$ and the empirical operator $T_n^*,T_n$ are
respectively replaced by their population analogues, the kernel integral operator
\[
T^* : g \in L_2(P_X) \mapsto T^*g := \int k(.,x) g(x) dP_X(x) = \e{k(X,\cdot) g(X)} \in \cH\,,
\]
and the change-of-space operator
\[
T: \qquad g \in \cH \mapsto g \in \cL_2(P_X)\,.
\]
The latter maps a function to itself
but between two Hilbert spaces which differ with respect to their geometry -- the inner
product of $\cH$ being defined by the kernel function $k$, while the
inner product of $\mathcal{L}_2(P_X)$ depends on the data generating
distribution (this operator is well defined:
since the kernel is bounded, all functions in $\cH$
are bounded and therefore square integrable under any distribution $P_X$).

The following results, taken from \cite{BauPerRos07} (Propositions 21 and 22) quantify more precisely that the empirical covariance operator $S_n = T_n^* T_n$ and the empirical integral operator applied to the data, $T_n ^*\Y$, are close to the population covariance operator $S=T^*T$ and to the
kernel integral operator applied to the noiseless target function, $T^*f^*$ respectively.

\begin{proposition}
\label{prop:hoeftype}
Provided that condition \eqref{boundedk} is true, the following holds:
\begin{equation}
\label{eq:hoeffop}
\prob{\norm{S_n-S}_{HS} \leq
\frac{4\kappa}{\sqrt{n}}\sqrt{\log \frac{2}{\gamma}} }
\geq 1-\gamma\,,
\end{equation}
where $\norm{.}_{HS}$ denotes the Hilbert-Schmidt norm. If the representation $f^*=Tf^*_{\cH}$ holds, and under assumption {\bf (Bernstein)}, we have the following:
\begin{equation}
\label{eq:berndata}
\prob{\norm{T_n^* \bY - S f^*_{\cH}} \leq
\frac{4M\sqrt{\kappa}}{\sqrt{n}} \log \frac{2}{\gamma}}
\geq 1-\gamma\,.
\end{equation}
\end{proposition}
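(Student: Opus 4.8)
The plan is to derive both statements from standard concentration inequalities for empirical means of i.i.d.\ random elements of a Hilbert space, together with the boundedness assumption \eqref{boundedk}; indeed they are exactly Propositions 21 and 22 of \cite{BauPerRos07}, so one legitimate route is simply to cite that reference, but the argument is short enough to outline.

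For \eqref{eq:hoeffop} I would write $S_n=\frac1n\sum_{i=1}^n\xi_i$ with $\xi_i:=k(X_i,\cdot)\otimes k(X_i,\cdot)$, viewed as i.i.d.\ random elements of the Hilbert space of Hilbert--Schmidt operators on $\cH$, so that $\e{\xi_i}=S$ and $S_n-S=\frac1n\sum_i(\xi_i-S)$. The reproducing property gives $\norm{\xi_i}_{HS}=\norm{k(X_i,\cdot)}_\cH^2=k(X_i,X_i)\le\kappa$, hence the centred summands $Z_i:=\xi_i-S$ obey $\norm{Z_i}_{HS}\le2\kappa$ almost surely and $\e{\norm{Z_i}_{HS}^2}\le\kappa^2$. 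Independence and centring give $\e{\norm{S_n-S}_{HS}^2}=\frac1n\e{\norm{Z_1}_{HS}^2}\le\kappa^2/n$, hence $\e{\norm{S_n-S}_{HS}}\le\kappa/\sqrt n$; and since changing one $X_i$ moves $\norm{S_n-S}_{HS}$ by at most $2\kappa/n$, McDiarmid's inequality yields $\norm{S_n-S}_{HS}\le\kappa n^{-1/2}\bigl(1+\sqrt{2\log(1/\gamma)}\bigr)$ with probability at least $1-\gamma$. An elementary check shows this is $\le4\kappa\sqrt{\log(2/\gamma)/n}$ for all $\gamma\in(0,1]$, and in the residual range $\log(2/\gamma)\ge n/4$ the claimed bound already exceeds $2\kappa$, so it holds trivially because $\norm{S_n-S}_{HS}\le2\kappa$ deterministically.

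For \eqref{eq:berndata} I would first pin down the centre: from $f^*=Tf^*_\cH$ and $\e{\eps|X}=0$ one gets $Sf^*_\cH=T^*Tf^*_\cH=T^*f^*=\e{f^*(X)k(X,\cdot)}=\e{Yk(X,\cdot)}$, whereas $T_n^*\bY=\frac1n\sum_iY_ik(X_i,\cdot)$; hence $T_n^*\bY-Sf^*_\cH=\frac1n\sum_iZ_i$ with $Z_i:=Y_ik(X_i,\cdot)-\e{Yk(X,\cdot)}$ i.i.d., $\cH$-valued, zero mean. Since $Y$ is unbounded, in place of an almost-sure bound one controls moments: $\norm{Y_ik(X_i,\cdot)}_\cH=\abs{Y_i}\sqrt{k(X_i,X_i)}\le\sqrt\kappa\,\abs{Y_i}$, and because the source condition in the inner regime forces $\norm{f^*}_\infty$ finite, the \textbf{(Bernstein)} condition on $\eps$ propagates to a Bernstein moment bound $\e{\norm{Z_i}_\cH^p}\le\frac12 p!\,\sigma^2B^{p-2}$ with $\sigma$ and $B$ of order $M\sqrt\kappa$ (absorbing $\rho$ into $M$ through the normalization of the source condition). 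The Bernstein inequality for Hilbert-space valued sums (Pinelis--Sakhanenko) then gives $\norm{T_n^*\bY-Sf^*_\cH}\le\sqrt{2\sigma^2\log(2/\gamma)/n}+B\log(2/\gamma)/n$ with probability at least $1-\gamma$, and bounding both terms by a common $4M\sqrt\kappa\,\log(2/\gamma)/\sqrt n$ (again comparing the square-root term with the linear one in the relevant range of $\log(2/\gamma)$ versus $n$) produces the stated inequality.

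The algebraic identities $K_n=T_nT_n^*$, $S=T^*T$, $\norm{\xi_i}_{HS}=k(X_i,X_i)$ and the constant bookkeeping are routine; the only genuinely delicate step is the second bound, where the \emph{scalar} pointwise moment condition \textbf{(Bernstein)} on $\eps$ has to be converted into a \emph{vector} Bernstein condition on the $\cH$-valued summands $Z_i$, and a Hilbert-space Bernstein inequality must then be applied with constants sharp enough to recover exactly the factor $4M\sqrt\kappa$. As this is carried out in \cite{BauPerRos07}, citing that reference is the expedient option.
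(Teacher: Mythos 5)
The paper offers no proof of this proposition at all --- it is imported verbatim as Propositions 21 and 22 of \cite{BauPerRos07} --- so your primary route (cite that reference) is exactly the paper's route, and your supplementary sketch is a reasonable reconstruction of what lies behind it. Your argument for \eqref{eq:hoeffop} is sound: writing $S_n$ as an empirical mean of the rank-one operators $k(X_i,\cdot)\otimes k(X_i,\cdot)$, bounded by $\kappa$ in Hilbert--Schmidt norm, and then applying McDiarmid to the norm of the centred sum is a legitimate alternative to the Hoeffding/Bernstein inequality for Hilbert-space-valued variables (Pinelis--Sakhanenko, \cite{PinSak85}) that the cited source and the paper's own Lemma A.5 use; both give the stated constant after the routine comparison of $1+\sqrt{2\log(1/\gamma)}$ with $4\sqrt{\log(2/\gamma)}$. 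The one soft spot is your treatment of \eqref{eq:berndata}: with the centre $Sf^*_\cH=\e{Yk(X,\cdot)}$ as written, the summands $Y_ik(X_i,\cdot)$ contain the signal part $f^*(X_i)k(X_i,\cdot)$, whose moments are controlled by $\norm{f^*}_\infty\le\sqrt{\kappa}\norm{f^*_\cH}_{\cH}$ and hence by $\rho$, not by $M$; "absorbing $\rho$ into $M$" is therefore not a cosmetic step, and the constant one actually obtains is of order $(M+\rho)\sqrt{\kappa}$. The cleaner decomposition is $T_n^*\bY-Sf^*_\cH=\tfrac1n\sum_i\eps_ik(X_i,\cdot)+(S_n-S)f^*_\cH$: the first (pure noise) term is exactly where \textbf{(Bernstein)} yields $4M\sqrt{\kappa}\log(2/\gamma)/\sqrt n$ via the Hilbert-space Bernstein inequality with $\sigma^2=M^2\kappa$, $B=M\sqrt{\kappa}$, and the second is handled by \eqref{eq:hoeffop} at cost $O(\rho)$. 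Since every downstream bound in the paper carries the factor $(M+\rho)$, this discrepancy is harmless for the results, but your sketch should make the extra $\rho$-term explicit rather than absorb it silently.
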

We note that  $f^*=Tf^*_{\cH}$ implies that the target function $f^*$
coincides with a function $f^*_{\cH}$ belonging to $\cH$ (remember that
$T$ is just the change-of-space operator). Hence, the second result \eqref{eq:berndata} is
valid for the case  with $r\geq 1/2$, but  it is not true in general
for $r<1/2\,$.

\subsection{Nemirovskii's result on conjugate gradient regularization rates}
We recall a sharp result due to Nemirovskii \cite{nemirovskii86}
establishing convergence rates for conjugate gradient methods in a
deterministic context. We present the result in an abstract context, then
show how, combined with the previous section, it leads  to a proof of Theorem \ref{thm:inner1}. Consider the linear equation
\[
Az^* = b\,,
\]
where $A$ is a bounded linear operator over a Hilbert space
$\cH$\,. Assume that the above equation has a solution and denote
$z^*$ its minimal norm solution; assume further that a self-adjoint
operator $\bar{A}$, and an element
$\bar{b}\in \cH$ are known such that
\begin{equation}
\label{eq:approxabst}
\norm{A-\bar{A}} \leq \delta\,; \qquad \norm{b-\bar{b}} \leq \eps\,,
\end{equation}
(with $\delta$ and $\eps$ known positive numbers).
Consider the CG algorithm based on the noisy operator $\bar{A}$ and
data $\bar{b}$, giving the output at step $m$
\begin{equation}
\label{eq:cgabst}
z_m = \argmin_{z \in \cK_m(\bar{A},\bar{b})} \norm{\bar{A}z - \bar{b}}^2\,.
\end{equation}
The {\em discrepancy principle} stopping rule is defined as
follows. Consider a fixed constant $\tau>1$ and define
\[
\bar{m} = \min \set{ m \geq 0: \norm{\bar{A}z_m - \bar{b}} <
  \tau(\delta\norm{z_m} + \eps) }\,.
\]
We output the solution obtained at step
$\max(0,\bar{m}-1)$\,. Consider a minor variation of
of this rule:
\[
\wh{m} =
\begin{cases}
\bar{m} & \text{ if } q_{\bar{m}}(0) < \eta \delta^{-1}\\
\max(0,\bar{m}-1) &  \text{ otherwise,}
\end{cases}
\]
where $q_{\bar{m}}$ is the degree $m-1$ polynomial such that
$z_{\bar{m}} = q_{\bar{m}}(\bar{A}) \bar{b}$\,, and
$\eta$ is an arbitrary positive constant
such that
$\eta<1/\tau$\,. Nemirovskii established the following theorem:
\begin{theorem}\label{thm:nemi}
Assume that (a) $\displaystyle \max(\norm{A},\norm{\bar{A}})\leq L;$ and that (b) $\displaystyle z^* = A^\mu u^*$ with $\displaystyle \norm{u^*}\leq R$\, for some $\mu>0$. Then for any $\theta \in [0,1]$\,, provided that $\wh{m}<\infty$ it holds
that
\[
\norm{A^\theta \paren{z_{\wh{m}} - z^*}}^2 \leq
c(\mu,\tau,\eta)R^{\frac{2(1-\theta)}{1+\mu}}(\eps + \delta R L^\mu)^{2(\theta+\mu)/(1+\mu)}\,.
\]
\end{theorem}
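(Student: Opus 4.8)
The plan is to follow Nemirovskii's classical argument for conjugate gradient regularization, adapted to the source-condition formulation stated here. The starting point is the variational characterization \eqref{eq:cgabst}: since $z_m$ minimizes $\|\bar{A}z-\bar{b}\|$ over the Krylov space $\cK_m(\bar{A},\bar{b})$, we may write $z_m = q_m(\bar{A})\bar{b}$ with $\deg q_m \leq m-1$, and the residual polynomial $p_m(t) = 1 - t q_m(t)$ satisfies $p_m(0)=1$ and is optimal in the sense that $\|p_m(\bar{A})\bar{b}\|$ is minimal among all polynomials of degree $\leq m$ with value $1$ at the origin. The first step is to record the standard CG optimality/orthogonality relations: the residuals $\bar{b}-\bar{A}z_m$ are $\bar{A}$-conjugate across iterations, which gives monotonicity of $\|z_m\|$ and of the residual norms, and—crucially—the two-sided control relating $\|\bar{A}^{\nu}p_m(\bar{A})v\|$ for different exponents $\nu$ via the value $q_m(0)$ (equivalently, via the smallest Ritz value). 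This is where the auxiliary quantity $q_m(0)$ and the case distinction in the definition of $\wh m$ enter: bounding $q_m(0)$ by $\eta\delta^{-1}$ is exactly what is needed to transfer estimates between powers of $\bar A$ without losing control.

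The second step is to reduce everything to the \emph{noiseless} quantities. Write $A z^* = b$ with $z^* = A^\mu u^*$, $\|u^*\|\le R$. Using \eqref{eq:approxabst}, replace $\bar A$ by $A$ and $\bar b$ by $b$ at the cost of additive errors $\delta$ and $\eps$; the term $\delta R L^\mu$ appearing in the bound is precisely the size of the perturbation of $b = Az^* = A^{1+\mu}u^*$ under $\|A-\bar A\|\le\delta$, since $\|(\bar A - A)A^\mu u^*\| \le \delta R L^\mu$. So effectively we must bound $\|A^\theta(z_{\wh m}-z^*)\|$ in terms of an \emph{effective noise level} $\bar\eps := \eps + \delta R L^\mu$ and the smoothness $z^*=A^\mu u^*$.

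The third step is the core interpolation estimate. One shows, for the CG iterate stopped by the discrepancy rule, a bound of the form $\|A^\theta(z_{\wh m}-z^*)\| \lesssim R^{(1-\theta)/(1+\mu)}\,\bar\eps^{(\theta+\mu)/(1+\mu)}$. The mechanism is: (i) the discrepancy stopping rule guarantees the residual at $\wh m$ is of order $\bar\eps$ (not much smaller), which via the source condition and a Bernstein-type/moment inequality for the optimal residual polynomial forces $m$ to not be too large — equivalently the ``regularization parameter'' $\lambda \sim 1/\text{(largest relevant Ritz value)}$ is not too small; (ii) conversely, because $\wh m$ is the \emph{first} such iteration, at step $\wh m - 1$ the residual still exceeds $\tau\bar\eps$, which lower-bounds the implicit regularization and yields an upper bound on the ``approximation error'' part $\|A^\theta p_{\wh m}(A)A^\mu u^*\|$ via the elementary inequality $\sup_{t\in[0,L]} t^{\theta+\mu}|p_m(t)| \le C(\mu,\theta)\lambda^{\theta+\mu}$ for residual polynomials, combined with the qualification $\mu$ being finite; (iii) the ``noise propagation'' part $\|A^\theta q_{\wh m}(A)(\text{error})\|$ is controlled using $q_{\wh m}(0)\le\eta\delta^{-1}$ together with $\|q_m(A)\|\lesssim 1/\lambda$. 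Balancing the two contributions against each other produces the stated exponents $2(1-\theta)/(1+\mu)$ and $2(\theta+\mu)/(1+\mu)$, which is the familiar interpolation scaling (and for $\theta=0$ reduces to the rate $\bar\eps^{2\mu/(1+\mu)}$, for $\theta=1$ to $\bar\eps^{2}$-type behavior up to the $R$ factor).

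I expect the main obstacle to be step three, specifically the simultaneous, \emph{uniform} control of the residual polynomial $p_{\wh m}$ in several norms — the place where CG genuinely differs from linear methods like Tikhonov or spectral cutoff. Because $p_{\wh m}$ depends on the data $\bar b$ (the iteration is nonlinear), one cannot simply plug in a fixed filter function; instead one must use the extremal property of $p_{\wh m}$ together with the orthogonality relations to bootstrap bounds, and the finiteness of $q_{\wh m}(0)$ (enforced by the modified stopping rule) is exactly the ingredient that makes this bootstrap close. Handling the boundary iterations $\wh m$ versus $\wh m-1$ cleanly, and checking that the constant $c(\mu,\tau,\eta)$ stays finite as long as $\eta<1/\tau$ and $\mu>0$, will require care but is otherwise a matter of tracking constants through Nemirovskii's estimates.
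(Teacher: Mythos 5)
First, note that the paper does not prove Theorem \ref{thm:nemi} at all: it is quoted from Nemirovskii \cite{nemirovskii86} (see also Hanke \cite{Hanke95}), so there is no in-paper proof to compare against; the closest in-house material is the adaptation of the same circle of ideas in the supplementary proofs of Theorems \ref{thm:inner2} and \ref{thm:outer1}. Measured against that, your outline has the right skeleton: the residual polynomial $p_m=1-xq_m$ and its extremal/orthogonality properties, the effective noise level $\bar{\eps}=\eps+\delta R L^\mu$ coming from $\norm{(\bar{A}-A)A^\mu u^*}\leq \delta R L^\mu$, the two-sided use of the discrepancy rule (residual below threshold at $\wh{m}$, above it at $\wh{m}-1$), the role of $q_{\wh{m}}(0)\leq \eta\delta^{-1}$ with $\eta<1/\tau$, and the exponents $2(1-\theta)/(1+\mu)$ and $2(\theta+\mu)/(1+\mu)$ obtained by interpolating the $\theta=0$ and $\theta=1$ bounds via $\norm{A^\theta v}\leq \norm{v}^{1-\theta}\norm{Av}^{\theta}$.

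The genuine gap is in your step three, items (ii) and (iii). The bounds you invoke there, $\sup_{t\in[0,L]}t^{\theta+\mu}\abs{p_m(t)}\leq C\lambda^{\theta+\mu}$ and $\norm{q_m(\bar{A})}\leq C\lambda^{-1}$, are false for CG residual polynomials on the whole interval $[0,L]$: $p_m$ has all $m$ of its roots inside the spectrum and grows without control beyond the largest one, and the elementary estimates $\abs{p_m(t)}\leq 1$, $q_m(t)\leq\abs{p_m'(0)}=q_m(0)$ hold only on $[0,x_{1,m}]$, \emph{below the smallest root}, by convexity of $p_m$ there. The actual proof therefore splits the spectrum at $x_{1,m}$ (or at a free parameter $\eps\leq x_{1,m}$): on the low part one uses those elementary bounds with $\lambda\sim\abs{p_m'(0)}^{-1}\leq x_{1,m}$, while on the high part one divides by a power of $x\geq x_{1,m}$ and reuses the residual norm itself; the residual norm is in turn bounded through the auxiliary function $\varphi_m(x)=p_m(x)\paren{x_{1,m}/(x_{1,m}-x)}^{1/2}$ and Hanke's Lemma 3.7, $\norm{p_m(\bar{A})\bar{b}}\leq\norm{F_{x_{1,m}}\varphi_m(\bar{A})\bar{b}}$, together with inequality (3.10) of \cite{Hanke95}. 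That lemma rests on the orthogonality of the sequence $(p_m)$ and is the single step that genuinely separates the nonlinear CG analysis from a linear filter argument; your sketch gestures at ``two-sided control via the smallest Ritz value'' but never supplies this device, and without it the bootstrap you describe does not close. (Compare the paper's Lemmas \ref{le:lemma1}--\ref{le:relpol}, which carry out exactly this splitting, in the warped-norm setting.)
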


\subsection{Proof of Theorem \ref{thm:inner1}}

We apply  Nemirovskii's result in our setting
(assuming $r\geq \frac{1}{2}$): By identifying the
approximate operator and data as $\bar{A}=S_n$ and $\bar{b} =
T^*_n\bY$, we see that the CG algorithm considered by
Nemirovskii \eqref{eq:cgabst} is exactly \eqref{eq:crit_cg2},
more precisely with the identification $z_m = f_{m}$.

For the population version, we identify $A=S$, and $z^* = f^*_{\cH}$
(remember that provided $r\geq \frac{1}{2}$ in the source condition,
then there exists $f^*_{\cH} \in \cH$ such that $f^* = T f^*_{\cH}$).

Condition (a) of Nemirovskii's theorem \ref{thm:nemi} is satisfied with $L=\kappa$
by the boundedness of the kernel.
Condition (b) is satisfied with $\mu = r - 1/2 \geq 0$ and
$R=\kappa^{-r}\rho$,
as implied by the source condition {\bf SC($r$)}. Finally, the concentration result
\ref{prop:hoeftype} ensures that the approximation conditions
\eqref{eq:approxabst} are satisfied
with probability $1-2\gamma$\,, more precisely
with $\delta = \frac{4\kappa}{\sqrt{n}}\sqrt{\log \frac{2}{\gamma}} $ and
$\eps = \frac{4M\sqrt{\kappa}}{\sqrt{n}} \log \frac{2}{\gamma}$.
(Here we replaced $\gamma$ in \eqref{eq:hoeffop} and \eqref{eq:berndata} by
$\gamma/2$, so that the two conditions are satisfied simultaneously, by
the union bound). The operator norm is upper bounded by the Hilbert-Schmidt norm, so that the deviation inequality for the operators is actually stronger than what is needed.

We consider the discrepancy principle stopping rule associated to these parameters, the choice $\eta=1/(2\tau)$, and $\theta=\frac{1}{2}$\,, thus obtaining the result,
since
\[
\norm{A^{\frac{1}{2}} \paren{z_{\wh{m}} - z^*}}^2 =
\norm{S^{\frac{1}{2}} \paren{f_{\wh{m}} - f^*_{\cH}}}^2_{\cH} =
\norm{f_{\wh{m}} - f^*_{\cH}}^2_2 .
\]

\subsection{Notes on the proof of Theorems \ref{thm:inner2} and \ref{thm:outer1}}
The above proof shows that an application of Nemirovskii's fundamental result for CG regularization of inverse problems under deterministic noise (on the data and the operator) allows us to obtain our
first result. One key ingredient is  the concentration property \ref{prop:hoeftype}
which allows to bound deviations in a quasi-deterministic manner.

To prove the sharper results of Theorems \ref{thm:inner2} and \ref{thm:outer1},  such a direct approach does not work unfortunately, and
a complete rework and extension of the proof is necessary.
The proof of Theorem  \ref{thm:inner2}  is presented in the supplementary material to the paper. In a nutshell,
the concentration result \ref{prop:hoeftype} is too coarse to prove the
optimal rates of convergence taking into
account the intrinsic dimension parameter. Instead of that result, we have
to consider the deviations from the mean in a ``warped'' norm, i.e. of the form
$\norm{(S+\lambda I)^{-\frac{1}{2}}(T_n^* \bY - T^*f^*)}$ for the data, and
$\norm{(S+\lambda I)^{-\frac{1}{2}}(S_n-S)}_{HS}$ for the operator (with
an appropriate choice of $\lambda>0$) respectively.
Deviations of this form were introduced and used in \cite{Cap06,CapDeV07} to
obtain sharp rates in the
framework of Tikhonov's regularization \eqref{eq:tyk}
and of the more general linear regularization
schemes of the form \eqref{eq:linfilter}. Bounds on deviations of this form can be obtained via a Bernstein-type concentration inequality for
Hilbert-space valued random variables.

On the one hand, the results
concerning linear regularization schemes of the form \eqref{eq:linfilter}
do not apply to the nonlinear CG regularization. On the other hand,
Nemirovskii's result does not apply to deviations controlled in the warped norm. Moreover, the ``outer'' case introduces additional technical difficulties. Therefore, the proofs for Theorems \ref{thm:inner2} and \ref{thm:outer1}, while still following the overall fundamental structure and ideas introduced by Nemirovskii, are significantly different in that context. As mentioned above, we present the complete proof of Theorem \ref{thm:inner2} in the supplementary material and a sketch of the proof of Theorem \ref{thm:outer1}.

%


\section{Conclusion}
In this work, we derived early stopping rules for kernel Conjugate Gradient regression that provide optimal
learning rates to the true target function. Depending on the situation that we study, the rates are adaptive with respect to the regularity of the target function in some cases. The proofs of our results rely most
importantly on ideas introduced by Nemirovskii \cite{nemirovskii86} and further developed  by Hanke \cite{Hanke95} for CG methods in the deterministic case, and moreover on ideas inspired by \cite{Cap06,CapDeV07}.

Certainly, in practice, as for a large majority of learning algorithms, cross-validation remains
the standard approach for model selection. The motivation of this
work is however mainly theoretical, and our overall goal is to show that from the learning theoretical point of view,
CG regularization stands on equal footing with
other well-studied regularization methods such as kernel ridge regression  or more general linear regularization methods (which includes between many others $L_2$ boosting). We also note that theoretically well-grounded model selection rules can generally help cross-validation in practice by providing a well-calibrated
parametrization of regularizer functions, or, as is the case here, of
thresholds used in the stopping rule.

One crucial property used in the proofs is that the proposed CG regularization schemes can be conveniently cast in the reproducing kernel Hilbert space $\cH$ as displayed in e.g \eqref{eq:crit_cg2}. This reformulation is not
possible for Kernel Partial Least Squares:  It is also a CG type method, but uses the standard Euclidean norm instead of the $K_n$-norm used here. This point is the main technical justification on  why we focus on \eqref{eq:crit_cg} rather than kernel PLS.
Obtaining optimal convergence rates also valid for Kernel PLS is an important future direction and should build on
the present work.

Another important direction for future efforts is the derivation of stopping rules that do not depend on the confidence parameter $\gamma$. Currently,  this dependence prevents us  to go from convergence in high probability to convergence in expectation, which would be desirable. Perhaps more importantly, it would be of interest to
find a stopping rule that is adaptive to both parameters
$r$ (target function regularity) and $s$ (intrinsic dimension parameter) without their a priori knowledge. We recall that our first
stopping rule is adaptive to $r$ but at the price of being worst-case
in $s$. In the literature on linear regularization methods,
the optimal choice of regularization parameter is also non-adaptive,
be it when considering optimal rates with respect to $r$ only
\cite{BauPerRos07} or to both $r$ and $s$ \cite{Cap06}.
An approach to alleviate this problem is to use a hold-out
sample for model selection; this was studied theoretically in \cite{Cap10}
for linear regularization methods (see also \cite{BlaMas06} for an
account of the properties of hold-out in a general setup). We strongly believe that the hold-out method will yield theoretically founded adaptive model selection for CG as well. However,
hold-out is typically regarded as inelegant in that it requires to throw away part of
the data for estimation. It would be of more interest to study
model selection methods that are based on using the whole data in the estimation phase. The application of Lepskii's method is  a possible step towards this direction.


\begin{thebibliography}{10}

\bibitem{Bat97}
R.~Bathia.
\newblock {\em Matrix Analysis}, volume 169 of {\em Graduate texts in
  mathematics}.
\newblock Springer, 1997.

\bibitem{BauPerRos07}
F.~Bauer, S.~Pereverzev, and L.~Rosasco.
\newblock {On Regularization Algorithms in Learning Theory}.
\newblock {\em Journal of Complexity}, 23:52--72, 2007.

\bibitem{Bissantz0701}
N.~Bissantz, T.~Hohage, A.~Munk, and F.~Ruymgaart.
\newblock {Convergence Rates of General Regularization Methods for Statistical
  Inverse Problems and Applications}.
\newblock {\em SIAM Journal on Numerical Analysis}, 45(6):2610--2636, 2007.

\bibitem{Blanchard10}
G.~Blanchard and N.~Kr\"amer.
\newblock {Kernel Partial Least Squares is Universally Consistent}.
\newblock {\em Proceedings of the 13th International Conference on Artificial
  Intelligence and Statistics, JMLR Workshop \& Conference Proceedings},
  9:57--64, 2010.

\bibitem{BlaMas06}
G.~Blanchard and P.~Massart.
\newblock {Discussion of {V}.~{K}oltchinskii's 2004 {IMS} Medallion Lecture
  paper, "{L}ocal {R}ademacher complexities and oracle inequalities in risk
  minimization"}.
\newblock {\em Annals of Statistics}, 34(6):2664--2671, 2006.

\bibitem{Cap06}
A.~Caponnetto.
\newblock {Optimal Rates for Regularization Operators in Learning Theory}.
\newblock Technical Report CBCL Paper 264/ CSAIL-TR 2006-062, Massachusetts
  Institute of Technology, 2006.

\bibitem{CapDeV07}
A.~Caponnetto and E.~De~Vito.
\newblock {Optimal Rates for Regularized Least-squares Algorithm}.
\newblock {\em Foundations of Computational Mathematics}, 7(3):331--368, 2007.

\bibitem{Cap10}
A.~Caponnetto and Y.~Yao.
\newblock {Cross-validation based Adaptation for Regularization Operators in
  Learning Theory}.
\newblock {\em Analysis and Applications}, 8(2):161--183, 2010.

\bibitem{Chun0901}
H.~Chun and S.~Keles.
\newblock {Sparse Partial Least Squares for Simultaneous Dimension Reduction
  and Variable Selection}.
\newblock {\em Journal of the Royal Statistical Society B}, 72(1):3--25, 2010.

\bibitem{cuckersmale}
F.~Cucker and S.~Smale.
\newblock {On the Mathematical Foundations of Learning}.
\newblock {\em Bulletin of the American Mathematical Society}, 39(1):1--50,
  2002.

\bibitem{Vito0601}
E.~De~Vito, L.~Rosasco, A.~Caponnetto, U.~De~Giovannini, and F.~Odone.
\newblock {Learning from Examples as an Inverse Problem}.
\newblock {\em Journal of Machine Learning Research}, 6(1):883, 2006.

\bibitem{Gyoerfi02}
L.~Gyorfi, M.~Kohler, A.~Krzyzak, and H.~Walk.
\newblock {\em {A Distribution-Free Theory of Nonparametric Regression}}.
\newblock Springer, 2002.

\bibitem{Hanke95}
M.~Hanke.
\newblock {\em {Conjugate Gradient Type Methods for Linear Ill-posed
  Problems}}.
\newblock Pitman Research Notes in Mathematics Series, 327, 1995.

\bibitem{LogRosetal08}
L.~Lo~Gerfo, L.~Rosasco, E.~Odone, F.and De~Vito, and A.~Verri.
\newblock {Spectral Algorithms for Supervised Learning}.
\newblock {\em Neural Computation}, 20:1873--1897, 2008.

\bibitem{MenNee10}
S.~Mendelson and J.~Neeman.
\newblock {Regularization in Kernel Learning}.
\newblock {\em The Annals of Statistics}, 38(1):526--565, 2010.

\bibitem{Naik0001}
P.~Naik and C.L. Tsai.
\newblock {Partial Least Squares Estimator for Single-index Models}.
\newblock {\em Journal of the Royal Statistical Society B}, 62(4):763--771,
  2000.

\bibitem{nemirovskii86}
A.~S. Nemirovskii.
\newblock {The Regularizing Properties of the Adjoint Gradient Method in
  Ill-posed Problems}.
\newblock {\em USSR Computational Mathematics and Mathematical Physics},
  26(2):7--16, 1986.

\bibitem{Ong05}
C.~S. Ong.
\newblock {Kernels: Regularization and Optimization}.
\newblock {\em Doctoral dissertation, Australian National University}, 2005.

\bibitem{Ong04}
C.~S. Ong, X.~Mary, S.~Canu, and A.~J. Smola.
\newblock {Learning with Non-positive Kernels}.
\newblock In {\em Proceedings of the 21st International Conference on Machine
  Learning}, pages 639 -- 646, 2004.

\bibitem{PinSak85}
I.~F. Pinelis and A.~I. Sakhanenko.
\newblock Remarks on inequalities for probabilities of large deviations.
\newblock {\em Theory Probab. Appl.}, 1(30):143--148, 1985.

\bibitem{Rosipal0101}
R.~Rosipal and L.J. Trejo.
\newblock Kernel {P}artial {L}east {S}quares {R}egression in {R}eproducing
  {K}ernel {H}ilbert {S}paces.
\newblock {\em Journal of Machine Learning Research}, 2:97--123, 2001.

\bibitem{Rosipal0301}
R.~Rosipal, L.J. Trejo, and B.~Matthews.
\newblock {Kernel PLS-SVC for Linear and Nonlinear Classification}.
\newblock In {\em {Proceedings of the Twentieth International Conference on
  Machine Learning}}, pages 640--647, {Washington, DC}, 2003.

\bibitem{Steinwart09}
I.~Steinwart, D.~Hush, and C.~Scovel.
\newblock {Optimal Rates for Regularized Least Squares Regression}.
\newblock In {\em Proceedings of the 22nd Annual Conference on Learning
  Theory}, pages 79--93, 2009.

\bibitem{Wold8401}
S.~Wold, H.~Ruhe, H.~Wold, and W.J.~Dunn III.
\newblock {The {C}ollinearity {P}roblem in {L}inear {R}egression. The {P}artial
  {L}east {S}quares ({PLS}) {A}pproach to {G}eneralized {I}nverses}.
\newblock {\em {SIAM Journal of Scientific and Statistical Computations}},
  5:735--743, 1984.

\bibitem{Yur95}
V.~Yurinksi.
\newblock {\em Sums and {G}aussian vectors}, volume 1617 of {\em Lecture notes
  in mathematics}.
\newblock Springer, 1995.

\end{thebibliography}

\appendix

\section{Supplementary Material}
\subsection{Notation}

We follow the notation used in the main part, in particular
the operators $T_n, T_n^*, T, T^*$ defined in Section 4.1, and we recall that
$S_n := T_n^*T_n$; $S=T^*T$; $K_n = T_nT_n^*$; and $K=TT^*$.

We denote by $(\xi_i)_{i \geq 1}$ the possibly finite sequence in $[0,\kappa]$ of nonzero
eigenvalues of $S$ and $K$\,,
and by $(\xi_{j,n})_{1 \leq j \leq n}$ the $n$-sequence of eigenvalues
of $S_n$ and $K_n$ respectively (in each case in decreasing order and with multiplicity). Finally,
 $(F_{u})_{u\geq 0}$ denotes the spectral family of the operator $S_n$\,, i.e.
$F_{u}$ is the orthogonal projector on the subspace of $\cH$ spanned by eigenvectors of
$S_n$ corresponding to eigenvalues strictly less than $u$.

It is useful to consider the spectral integral representation:
If $(e_{i,n})_{1\leq i \leq n}$ denotes the orthogonal eigensystem of $S_n$ associated to
the non-zero eigenvalues $(\lambda_{i,n})_{1 \leq i\leq n}$\,,
for any integrable function $h$ on $[0,\kappa]$, we set
\[
\int_0^{\kappa} h(u) d\norm{F_{u,n}T_n^*\bY}^2 :=
\inner{T_n^*\bY,h(S_n)T_n^* \bY } = \sum_{i=1}^n
h(\lambda_{i,n}) \inner{T_n^* \bY,e_{i,n}}^2\,.
\]

By its definition, the output of the $m$-th iteration of the CG algorithm
can be put under the form $f_m = q_m(S_n) T_n^* \bY\,,$ where $q_m \in \cP_{m-1}$\,, the set of real polynomials of degree less than $m-1$\,.
A crucial role is played by the {\em residual polynomial}
\[
p_m(x) = 1 - xq_m(x) \in \cP_{m}^0\,,
\]
where $\cP_{m}^0$ is the set of real polynomials of degree less than
$m$ and having constant term equal to~1. In particular
$T_n^*\bY - S_nf_m = p_m(S_n)T_n^*\bY$. Furthermore, the
definition of the CG algorithm implies that the sequence
$(p_m)_{m\geq 0}$ are orthogonal polynomials for the scalar
product $[.,.]_{(1)}$, where for $i\geq 0$ we define
\begin{align*}
[p,q]_{(i)} & := \inner{p(S_n)T_n^*\bY,S_n^i q(S_n)T_n^*\bY} = \int_0^{\kappa} p(u)q(u)u^i d\norm{F_{u,n}T_n^*\bY}^2\,.\\
\end{align*}
This can be shown as follows:  $p_m $ is the orthogonal projection,  of the origin onto the affine
  space $\cP_{m}^0 = 1 + x\cP_{m-1}$ with the scalar product
$[.,.]_{(0)}$,\,, where $x\cP_{m-1}$ denotes (with some
  abuse of notation) the set of polynomials of degree less than $m$ with constant
  coefficient equal to zero. Thus $0=[p_m,xq]_{(0)}=[p_m,q]_{(1)}$ for any $q \in
  \cP_{m-1}$\,. From the theory of orthogonal polynomials, it results
  that for any $m \leq m_{final}:= \#\set{i: 1\leq i \leq n,
    \xi_{i,n}\inner{T^*_n \bY,e_{i,n}} \neq 0}$\,, the polynomial
  $p_m$ has exactly $m$ distinct roots belonging to $[0,\kappa]$\,,
  which we  denote by $(x_{k,m})_{1 \leq k \leq m}$
  (in increasing order). Finally, we  use the notation $c(a,b)$ to denote a function depending
on the stated parameters only, and whose exact value can change from line to line.

\subsection{Preparation of the proof}

We follow the general architecture of Nemirovskii's proof to establish rates.
We recall that since we assume $r\geq 1/2$, the representation $f^*
= Tf^*_\cH$ holds.
The main difference to Nemirovskii's original result
is that (similar to the approach of \cite{Cap06,CapDeV07}) we use
deviation bounds in a ``warped'' norm rather than in the standard norm. More precisely, we consider the following
type of assumptions:
\begin{itemize}
\item[{\bf B1($\lambda$)}]
$\displaystyle \norm{(S+\lambda I)^{-\frac{1}{2}}(T_n^*\bY-S_nf^*_\cH)} \leq \delta(\lambda)$\,,
\item[{\bf B2($\lambda$)}]
$\displaystyle \norm{(S+\lambda I)(S_n
  +\lambda I)^{-1}} \leq \Upsilon^2$\,, with $\Upsilon \geq 1$\\
(this implies in particular
$\displaystyle \norm{(S+\lambda I)^{\frac{1}{2}}(S_n
  +\lambda I)^{-\frac{1}{2}}} \leq \Upsilon$ via \eqref{eq:multpert} below)\,,
\item[{\bf B3}]
$\displaystyle \norm{S-S_n} \leq \kappa\Delta$\,.
\end{itemize}

In the rest of this section we set $\mu=r-1/2$.
Under the source condition assumption {\bf SC($r$)}, for
$r\geq\frac{1}{2}$ the
representation $f^* = K^r u$ can be rewritten
\[
f^* = (TT^*)^ru = T (T^*T)^{r-\frac{1}{2}} (T^*T)^{-\frac{1}{2}} T^*u
= T S^\mu (T^*T)^{-\frac{1}{2}} T^*u,
\]
by identification we therefore have the source condition for $f_{\cH}$
given by $f_{\cH} = S^\mu w$ with
$w =(T^*T)^{-\frac{1}{2}} T^*u $, and $\norm{w}_{\cH} \leq \norm{u}$, since $(T^*T)^{-\frac{1}{2}} T^*$ is a restricted isometry from $L_2(P_X)$ into $\cH$.

We define the shortcut notation
\begin{equation}
\label{eq:defZ}
Z_\mu(\lambda) =
\begin{cases}
\lambda^\mu & \text{ for } \mu \leq 1\,, \\
\kappa^\mu \Delta & \text{ for } \mu > 1.\\
\end{cases}
\end{equation}

We start with preliminary technical lemmas, before turning to the proof of Theorem 2.2.

\begin{lemma}
\label{le:lemma1}
For any $\lambda>0$\,, if assumptions {\bf SC($r$)}, {\bf
  B1($\lambda$)}, {\bf B2($\lambda$)} and {\bf B3}
hold, then for any iteration step $1\leq m\leq m_{final}$
\begin{align}
\norm{T_n^*(T_n f_m -\bY)}  \leq & c(\mu) \Upsilon^2 \paren{ \abs{p'_m(0)}^{-(\mu+1)}
  + Z_\mu(\lambda)\abs{p'_m(0)}^{-1}}
\kappa^{-\mu-\frac{1}{2}}\rho \nonumber \\
& + \paren{\abs{p'_m(0)}^{-\frac{1}{2}}+\lambda^{\frac{1}{2}}}\Upsilon \delta(\lambda)\,.
\label{eq:lemma1}
\end{align}
\end{lemma}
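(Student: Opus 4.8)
\emph{Proof plan.} The first step is to rewrite the quantity to be estimated through the residual polynomial. Since $f_m=q_m(S_n)T_n^*\bY$ and $p_m(x)=1-xq_m(x)$, we have $T_n^*T_nf_m=S_nf_m=\paren{I-p_m(S_n)}T_n^*\bY$, so that
\[
T_n^*\paren{T_nf_m-\bY}=-\,p_m(S_n)\,T_n^*\bY ,
\]
and it is enough to bound $\norm{p_m(S_n)T_n^*\bY}$. Using $f^*=Tf^*_\cH$ and the source condition rewritten (as above in this section) as $f^*_\cH=S^\mu w$ with $\norm w\le\kappa^{-\mu-\frac12}\rho$, I would peel off the noiseless part,
\[
\norm{p_m(S_n)T_n^*\bY}\le\norm{p_m(S_n)S_nf^*_\cH}+\norm{p_m(S_n)\paren{T_n^*\bY-S_nf^*_\cH}} ,
\]
and treat the two terms separately.

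For the noise term I would insert $(S+\lambda I)^{\pm 1/2}$, apply \textbf{B1($\lambda$)}, and then transfer from $S$ to $S_n$ at the cost of one factor $\Upsilon$ via $\norm{(S_n+\lambda I)^{-1/2}(S+\lambda I)^{1/2}}\le\Upsilon$ (the adjoint of the inequality noted just after \textbf{B2($\lambda$)}), which gives
\[
\norm{p_m(S_n)\paren{T_n^*\bY-S_nf^*_\cH}}\le\Upsilon\,\delta(\lambda)\,\norm{p_m(S_n)(S_n+\lambda I)^{\frac12}} .
\]
The spectral calculus of $S_n$ then reduces this to scalar estimates, $\norm{p_m(S_n)(S_n+\lambda I)^{1/2}}\le\sup_{0\le x\le\kappa}\sqrt x\,\abs{p_m(x)}+\lambda^{1/2}\sup_{0\le x\le\kappa}\abs{p_m(x)}$, for which I invoke the classical properties of the CG residual polynomial recalled above: $p_m$ has $m$ simple roots $x_{k,m}\in(0,\kappa]$, $\abs{p'_m(0)}=\sum_k x_{k,m}^{-1}$, and consequently $\sup_{0\le x\le\kappa}\sqrt x\,\abs{p_m(x)}\le\abs{p'_m(0)}^{-1/2}$ and $\sup_{0\le x\le\kappa}\abs{p_m(x)}\le1$. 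This yields the second line of \eqref{eq:lemma1}.

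For the approximation term $\norm{p_m(S_n)S_nf^*_\cH}=\norm{p_m(S_n)S_nS^\mu w}$, the strategy is to move the fractional power $S^\mu$ across onto $S_n$. Using \textbf{B2($\lambda$)} to absorb the non-commutativity of $S$ and $S_n$ (this is where the $\Upsilon^2$ prefactor comes from) and \textbf{B3} together with a perturbation bound for $t\mapsto t^\mu$ to replace $S^\mu$ by $S_n^\mu$, the problem reduces to two scalar estimates for $p_m$ on $[0,\kappa]$: the \emph{main} one, $\sup_{0\le x\le\kappa}x^{1+\mu}\abs{p_m(x)}\le c(\mu)\abs{p'_m(0)}^{-(1+\mu)}$, which produces the term $\abs{p'_m(0)}^{-(\mu+1)}$, and $\sup_{0\le x\le\kappa}x\,\abs{p_m(x)}\le\abs{p'_m(0)}^{-1}$, which combines with the perturbation bound to produce the term $Z_\mu(\lambda)\abs{p'_m(0)}^{-1}$. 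The dichotomy in the definition of $Z_\mu(\lambda)$ is precisely that of the perturbation estimate for $t\mapsto t^\mu$: for $\mu\le1$ this map is operator monotone, so $\norm{S^\mu-S_n^\mu}\le\norm{S-S_n}^\mu\le(\kappa\Delta)^\mu$, which is at most $\lambda^\mu$ in the regime relevant here; for $\mu>1$ the map is only Lipschitz on $[0,\kappa]$, giving $\norm{S^\mu-S_n^\mu}\le\mu\kappa^{\mu-1}\norm{S-S_n}\le\mu\kappa^\mu\Delta$ (the factor $\mu$ being absorbed into $c(\mu)$). Multiplying throughout by $\norm w\le\kappa^{-\mu-1/2}\rho$ and combining with the noise term gives \eqref{eq:lemma1}.

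I expect the approximation term to be the main obstacle: estimating $p_m(S_n)S_nS^\mu$ while $S$ and $S_n$ do not commute forces a careful interplay between the warped deviation controls \textbf{B2($\lambda$)} and \textbf{B3} (and the operator-monotonicity facts that dictate the $\mu\le1$ versus $\mu>1$ split in $Z_\mu$) and the non-elementary estimates on the conjugate gradient residual polynomials $p_m$ --- their root structure and the inequalities $\sup_x x^{a}\abs{p_m(x)}\le c(a)\abs{p'_m(0)}^{-a}$ --- which are the analytic core of Nemirovskii's and Hanke's treatment of conjugate gradient regularization for ill-posed problems.
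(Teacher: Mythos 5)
Your overall architecture---reduce to $\norm{p_m(S_n)T_n^*\bY}$, peel off the noiseless part $S_nf^*_\cH=S_nS^\mu w$, handle the noise term with \textbf{B1($\lambda$)}/\textbf{B2($\lambda$)} and the approximation term with \textbf{B2($\lambda$)}/\textbf{B3} plus the $\mu\le1$ versus $\mu>1$ dichotomy---matches the paper's proof. But there is a genuine gap at the analytic core: the whole-interval scalar estimates you invoke, $\sup_{0\le x\le\kappa}\abs{p_m(x)}\le1$ and $\sup_{0\le x\le\kappa}x^{a}\abs{p_m(x)}\le c(a)\abs{p'_m(0)}^{-a}$, are false in general. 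Writing $p_m(x)=\prod_k(1-x/x_{k,m})$, these bounds hold only up to the first root $x_{1,m}$; between roots $\abs{p_m}$ can be large (two roots at $0.1$ and $1$ give $\abs{p_2(0.5)}=2$). Hence the operator-norm bounds you rely on, such as $\norm{p_m(S_n)(S_n+\lambda I)^{1/2}}\le\abs{p'_m(0)}^{-1/2}+\lambda^{1/2}$, fail, because the operator norm sees the entire spectrum of $S_n$ and not just $[0,x_{1,m}]$.

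The missing ingredient is the step the paper takes \emph{before} decomposing: Hanke's Lemma 3.7, which exploits the orthogonality of the residual polynomials $(p_m)$ with respect to $[\cdot,\cdot]_{(1)}$ to prove $\norm{p_m(S_n)T_n^*\bY}\le\norm{F_{x_{1,m}}\varphi_m(S_n)T_n^*\bY}$, where $\varphi_m(x)=p_m(x)\paren{x_{1,m}/(x_{1,m}-x)}^{1/2}$. This is an inequality for the particular vector $T_n^*\bY$, not an operator-norm statement, and it inserts the spectral projector $F_{x_{1,m}}$ onto $[0,x_{1,m})$. Only after this localization does the paper split into noise and approximation terms; every subsequent operator norm then carries $F_{x_{1,m}}$, and the bounds $\sup_{x\in[0,x_{1,m}]}x^\nu\varphi^2_m(x)\le\nu^\nu\abs{p'_m(0)}^{-\nu}$ legitimately apply. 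The order of operations (localize first, decompose second) is therefore essential, not cosmetic. A secondary discrepancy: for $\mu\le1$ the paper does not replace $S^\mu$ by $S_n^\mu$ via operator monotonicity, which would yield $(\kappa\Delta)^\mu$ rather than the $\lambda^\mu$ required by $Z_\mu(\lambda)$ for arbitrary $\lambda>0$; it instead inserts $(S_n+\lambda I)^{\mu}$ and uses \textbf{B2($\lambda$)} together with $\norm{A^sB^s}\le\norm{AB}^s$, which produces the $\lambda^\mu\abs{p'_m(0)}^{-1}$ term (and the $\Upsilon^2$ prefactor) directly.
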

\begin{proof}
Recall that $(x_{k,m})_{1 \leq k \leq m}$ denote
the $m$ roots of the polynomial $p_m$\,; define further the function $\varphi_m$ on
the interval $[0,x_{1,m}]$ as
\[
\varphi_m(x) = p_m(x) \paren{\frac{x_{1,m}}{x_{1,m}-x}}^{\frac{1}{2}}
\]
Following the idea introduced by Nemirovski, it can be shown that
\begin{align*}
\norm{T_n^*(T_n f_m -\bY)} & = \norm{p_m(S_n)T_n^*\bY}\\
& \leq \norm{F_{x_{1,m}} \varphi_m(S_n) T_n^* \bY }\\
& \leq \norm{F_{x_{1,m}} \varphi_m(S_n) S_n f^*_\cH} + \norm{F_{x_{1,m}} \varphi_m(S_n)
(T_n^* \bY - S_n f^*_\cH)}:= (I) + (II).
\end{align*}
Above, the first inequality (lemma 3.7. in \cite{Hanke95}) is the crucial point, and relies
fundamentally on the fact that $(p_m)$ is an orthogonal polynomial
sequence.

We start with controlling the second term:
\begin{eqnarray*}
(II) &=& \norm{F_{x_{1,m}} \varphi_m(S_n) (T_n^* \bY - S_n f^*_\cH)}\\
& = &\norm{F_{x_{1,m}} \varphi_m(S_n) (S+\lambda I)^{\frac{1}{2}}
  (S+\lambda I)^{-\frac{1}{2}}(T_n^* \bY - S_n f^*_\cH)}  \\
& \leq& \norm{F_{x_{1,m}} \varphi_m(S_n) (S_n+\lambda I)^{\frac{1}{2}}}
\Upsilon \delta(\lambda)\\
& \leq &\paren{\sup_{x\in[0, x_{1,m}]} x^{\frac{1}{2}} \varphi_m(x) +
\lambda^{\frac{1}{2}}\sup_{x\in[0, x_{1,m}]} \varphi_m(x)} \Upsilon
\delta(\lambda)\\
& \leq &\paren{\abs{p'_m(0)}^{-\frac{1}{2}} + \lambda^{\frac{1}{2}}}
\Upsilon \delta(\lambda)\,,
\end{eqnarray*}
where the last line used the inequality (see (3.10) in
\cite{Hanke95})
\begin{equation}
\label{eq:boundphi}
\sup_{x \in [0,x_{1,m}]} x^\nu \varphi^2_m(x) \leq \nu^\nu \abs{p'_m(0)}^{-\nu}\,,
\end{equation}
for any $\nu \geq 0$ (using the convention $0^0=1$),  which we applied above for $\nu=0,1$\,.
For the first term, we use assumption {\bf SC($r$)};
first consider the case $\mu > 1$:
\begin{align*}
(I) = \norm{F_{x_{1,m}} \varphi_m(S_n) S_n f^*_\cH} & =
\norm{F_{x_{1,m}} \varphi_m(S_n) S_n S^\mu w}\\
& \leq \paren{\norm{F_{x_{1,m}} \varphi_m(S_n) S_n^{\mu+1}} +
  \norm{F_{x_{1,m}} \varphi_m(S_n) S_n}\norm{S^\mu - S_n^\mu}}\kappa^{-\mu-\frac{1}{2}} \rho\\
& \leq c(\mu) \paren{ \abs{p'_m(0)}^{-(\mu+1)} +
  \kappa^\mu \Delta \abs{p'_m(0)}^{-1}}\kappa^{-\mu-\frac{1}{2}}\rho\,,
\end{align*}
where we applied \eqref{eq:boundphi} with $\nu = 2(\mu+1)$, $\nu=2$ and \eqref{eq:controlpowerop}.
\end{proof}
For the case $\mu \leq 1$, using \eqref{eq:multpert} and arguments  similar to the previous case:
\begin{eqnarray*}
(I) &=& \norm{F_{x_{1,m}} \varphi_m(S_n) S_n f^*_\cH} \\
& =&
\norm{F_{x_{1,m}} \varphi_m(S_n) S_n S^\mu w}\\
& \leq &\norm{F_{x_{1,m}} \varphi_m(S_n) S_n (S_n+\lambda I)^{\mu}}
\norm{(S_n+\lambda I)^{-\mu}(S+\lambda I)^\mu} \norm{(S+\lambda)^{-\mu}S^{\mu}}
\kappa^{-\mu-\frac{1}{2}} \rho\\
& \leq& c(\mu) \Upsilon^2 \paren{ \abs{p'_m(0)}^{-(\mu+1)} +
  \lambda^\mu  \abs{p'_m(0)}^{-1}}\kappa^{-\mu-\frac{1}{2}}\rho\,.
\end{eqnarray*}

\begin{lemma}
\label{le:lemma2}
For any $\lambda>0$\,, if assumptions {\bf SC($r$)}, {\bf
  B1($\lambda$)}, {\bf B2($\lambda$)} and {\bf B3}
hold, then for any iteration step $1 \leq m \leq m_{final}$, for any
$\eps \in (0,x_{1,m})$:
\begin{align*}
\norm{T(f_m-f^*_\cH)} \leq & \Upsilon \Bigg(
 3\paren{1+\lambda\paren{\abs{p'_m(0)}+\eps^{-1}}}\Upsilon \delta(\lambda)
+c(\mu)\Upsilon^2 \paren{\eps^{\frac{1}{2}}+\lambda^{\frac{1}{2}}}\paren{\eps^\mu +
   Z_\mu(\lambda)}\kappa^{-\mu-\frac{1}{2}}\rho \\
& +
\sqrt{2}\paren{1+\frac{\lambda^{\frac{1}{2}}}{\eps^{\frac{1}{2}}}}\eps^{-\frac{1}{2}}\norm{T_n^*(T_nf_m-\bY)} \Bigg)
\end{align*}
If $m=0$, the above inequality is valid for any $\eps>0$.
\end{lemma}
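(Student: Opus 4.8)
The plan is to follow the architecture of Nemirovskii's proof, but to carry out every estimate in the ``warped'' norm attached to $S+\lambda I$, so that the roles of $S$ and $S_n$ can be swapped at the cost of a power of $\Upsilon$ each time. First I would reduce the claim to a bound on $\norm{(S_n+\lambda I)^{\frac12}(f_m-f^*_\cH)}_\cH$: since $S=T^*T$ we have $\norm{T(f_m-f^*_\cH)}=\norm{S^{\frac12}(f_m-f^*_\cH)}_\cH\le\norm{(S+\lambda I)^{\frac12}(f_m-f^*_\cH)}_\cH$, and inserting $(S_n+\lambda I)^{-\frac12}(S_n+\lambda I)^{\frac12}$ and invoking the consequence of {\bf B2($\lambda$)} that $\norm{(S+\lambda I)^{\frac12}(S_n+\lambda I)^{-\frac12}}\le\Upsilon$ produces the leading factor $\Upsilon$. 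I would also record the two elementary algebraic identities that drive the rest of the argument: with $\xi:=T_n^*\bY-S_nf^*_\cH$ (so that {\bf B1($\lambda$)} reads $\norm{(S+\lambda I)^{-\frac12}\xi}\le\delta(\lambda)$), using $xq_m(x)=1-p_m(x)$ and $T_n^*\bY=S_nf^*_\cH+\xi$ one gets $f_m-f^*_\cH=-p_m(S_n)f^*_\cH+q_m(S_n)\xi$, and directly $S_n(f_m-f^*_\cH)=\xi+T_n^*(T_nf_m-\bY)$.

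Next I would split $(S_n+\lambda I)^{\frac12}(f_m-f^*_\cH)$ via the spectral projector $F_\eps$ of $S_n$ onto eigenvalues $<\eps$ (recall $\eps<x_{1,m}$) into a ``high-frequency'' part $(S_n+\lambda I)^{\frac12}(I-F_\eps)(f_m-f^*_\cH)$ and a ``low-frequency'' part $(S_n+\lambda I)^{\frac12}F_\eps(f_m-f^*_\cH)$. On the range of $I-F_\eps$ the operator $S_n$ is invertible with $\norm{S_n^{-1}(I-F_\eps)}\le\eps^{-1}$, so the second identity gives $(I-F_\eps)(f_m-f^*_\cH)=S_n^{-1}(I-F_\eps)\brac{\xi+T_n^*(T_nf_m-\bY)}$. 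For the residual piece I would use $\norm{(S_n+\lambda I)^{\frac12}S_n^{-1}(I-F_\eps)}\le\sup_{x\ge\eps}(x+\lambda)^{\frac12}x^{-1}\le\eps^{-\frac12}(1+\lambda^{\frac12}\eps^{-\frac12})$; for the noise piece I would write $\xi=(S+\lambda I)^{\frac12}\eta$ with $\norm\eta\le\delta(\lambda)$, peel off $(S_n+\lambda I)^{-\frac12}(S+\lambda I)^{\frac12}$ (norm $\le\Upsilon$ by {\bf B2($\lambda$)}), and bound what remains by $\norm{(S_n+\lambda I)S_n^{-1}(I-F_\eps)}\le\sup_{x\ge\eps}(x+\lambda)/x=1+\lambda\eps^{-1}$. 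This gives, for the high-frequency part, the bound $(1+\lambda\eps^{-1})\Upsilon\delta(\lambda)+\eps^{-\frac12}(1+\lambda^{\frac12}\eps^{-\frac12})\norm{T_n^*(T_nf_m-\bY)}$.

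For the low-frequency part I would use the first identity. In the summand $(S_n+\lambda I)^{\frac12}F_\eps p_m(S_n)f^*_\cH$, substitute the source condition $f^*_\cH=S^\mu w$ with $\norm w\le\kappa^{-\mu-\frac12}\rho$; transfer $S^\mu$ to $(S_n+\lambda I)^\mu$ by Cordes' inequality together with the multiplicative perturbation bound \eqref{eq:multpert} when $\mu\le1$, or by splitting $S^\mu=S_n^\mu+(S^\mu-S_n^\mu)$ and using \eqref{eq:controlpowerop} with {\bf B3} to get $\norm{S^\mu-S_n^\mu}\le c(\mu)\kappa^\mu\Delta$ when $\mu>1$; and use $\abs{p_m}\le1$ on $[0,\eps]$, which is \eqref{eq:boundphi} with $\nu=0$ (via $\abs{p_m}\le\abs{\varphi_m}$ there). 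In both regimes $\sup_{x\in[0,\eps]}(x+\lambda)^{\mu+\frac12}$, respectively $(\eps+\lambda)^{\frac12}\eps^\mu$ and $(\eps+\lambda)^{\frac12}$, is controlled by $(\eps^{\frac12}+\lambda^{\frac12})(\eps^\mu+Z_\mu(\lambda))$ with $Z_\mu$ as in \eqref{eq:defZ}, producing the term $c(\mu)\Upsilon^2(\eps^{\frac12}+\lambda^{\frac12})(\eps^\mu+Z_\mu(\lambda))\kappa^{-\mu-\frac12}\rho$. For the summand $(S_n+\lambda I)^{\frac12}F_\eps q_m(S_n)\xi$, again write $\xi=(S+\lambda I)^{\frac12}\eta$, peel off $(S_n+\lambda I)^{-\frac12}(S+\lambda I)^{\frac12}$ (norm $\le\Upsilon$), and bound $\norm{(S_n+\lambda I)F_\eps q_m(S_n)}=\sup_{x\in[0,\eps]}\abs{(x+\lambda)q_m(x)}\le2+\lambda\abs{p'_m(0)}$, which follows from $(x+\lambda)q_m(x)=(1-p_m(x))+\lambda q_m(x)$, $\abs{1-p_m}\le2$, and the monotonicity of the CG iteration polynomials (from \cite{Hanke95}) giving $\sup_{[0,\eps]}\abs{q_m}=q_m(0)=\abs{p'_m(0)}$; this yields $(2+\lambda\abs{p'_m(0)})\Upsilon\delta(\lambda)$.

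Finally I would collect everything: the two $\delta(\lambda)$-contributions combine to $(3+\lambda\eps^{-1}+\lambda\abs{p'_m(0)})\Upsilon\delta(\lambda)=3(1+\lambda(\abs{p'_m(0)}+\eps^{-1}))\Upsilon\delta(\lambda)$, the residual contribution is subsumed by $\sqrt2(1+\lambda^{\frac12}\eps^{-\frac12})\eps^{-\frac12}\norm{T_n^*(T_nf_m-\bY)}$, and the source contribution is as above; multiplying by the overall $\Upsilon$ from the reduction step gives exactly the asserted inequality. The degenerate case $m=0$ ($p_0\equiv1$, $q_0\equiv0$, $\abs{p'_0(0)}=0$, no roots) is covered by the same computation with $F_\eps$ for an arbitrary $\eps>0$, since then all the polynomial estimates on $[0,\eps]$ hold trivially. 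I expect the main obstacle to be the high-frequency step: one must recognize that on the well-conditioned subspace the target error can be rewritten via $S_n(f_m-f^*_\cH)=\xi+T_n^*(T_nf_m-\bY)$ and then route the $(S+\lambda I)^{\frac12}$-warping through {\bf B2($\lambda$)} at each swap of $S$ and $S_n$ so that only the prescribed powers of $\Upsilon$ survive; the orthogonal-polynomial facts about $p_m,q_m$ on $[0,x_{1,m}]$ are needed but are quoted from \cite{Hanke95}.
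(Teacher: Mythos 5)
Your proposal is correct and follows essentially the same route as the paper's proof: the same reduction $\norm{S^{1/2}(f_m-f^*_\cH)}\le\Upsilon\norm{(S_n+\lambda I)^{1/2}(f_m-f^*_\cH)}$ via {\bf B2($\lambda$)}, the same three-way split into a low-frequency noise term $F_\eps q_m(S_n)\xi$, a low-frequency source term $F_\eps p_m(S_n)f^*_\cH$ (the paper phrases these via the noiseless iterate $\bar f_m=q_m(S_n)S_nf^*_\cH$, which is the identical decomposition), and a high-frequency term controlled through $S_n(f_m-f^*_\cH)=\xi+T_n^*(T_nf_m-\bY)$, with the same case distinction $\mu\le 1$ versus $\mu>1$ and the same polynomial estimates from \cite{Hanke95}. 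The minor differences (your $2+\lambda\abs{p'_m(0)}$ in place of the paper's $1+\lambda\abs{p'_m(0)}$, and the absence of the $\sqrt2$ in the high-frequency noise piece) still land within the stated constants.
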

\begin{proof}
Set $\bar{f}_m=q_m(S_n)S_nf^*_\cH$\,. This is the element  in $\mathcal{H}$ that we obtain
by applying  the $m$th-iteration CG polynomial $q_m$ to the {\em noiseless} data.  We have
\begin{eqnarray*}
\norm{T(f_m-f^*_\cH)} & = &\norm{S^{\frac{1}{2}}(f_m-f^*_\cH)}\leq \Upsilon \norm{(S_n+\lambda I)^{\frac{1}{2}}(f_m - f^*_\cH)}\\
&\leq&
\Upsilon\Bigg( \norm{F_{\eps}(S_n+\lambda I)^{\frac{1}{2}}(f_m-\bar{f}_m)}
+ \norm{F_{\eps} (S_n+\lambda I)^{\frac{1}{2}}(\bar{f}_m-f^*_\cH)}\\
&& +
  \norm{F_{\eps}^\perp (S_n+\lambda I)^\frac{1}{2}(f_m-f^*_\cH)}\Bigg)\\
  &:= &\Upsilon ( (I) + (II) + (III) )\,,
\end{eqnarray*}
where we denote $F_\eps^\perp := (I-F_\eps)$.  {\em First summand:}
\begin{align*}
(I) = \norm{F_{\eps}(S_n+\lambda I)^{\frac{1}{2}}(f_m-\bar{f}_m)}
 & = \norm{F_{\eps}(S_n+\lambda I)^{\frac{1}{2}}q_m(S_n)
   (S+\lambda I)^{\frac{1}{2}} (S+\lambda I)^{-\frac{1}{2}} \paren{T_n^*
     \bY - S_n f^*_\cH}}\\
& \leq \Upsilon \norm{F_{\eps}(S_n+\lambda I)^{\frac{1}{2}}q_m(S_n)
   (S_n+\lambda I)^{\frac{1}{2}}} \delta(\lambda)\\
& \leq \Upsilon \delta(\lambda) \paren{\sup_{x \in [0,\eps]} x q_m(x)
  + \lambda \sup_{x \in [0,\eps]} q_m(x)}\\
& \leq \Upsilon \delta(\lambda) \paren{1 + \lambda \abs{p'_m(0)}}\,.
\end{align*}
The last inequality is obtained by the following argument: if $m\geq 1$,
since $\eps\leq x_{1,m}$, $p_m$ is convex in $[0,\eps]$\,, we have
\[
q_m(x) = \frac{1-p_m(x)}{x} \leq \abs{p'_m(0)} \qquad \text{ for } x\in[0,\eps]\,;
\]
and also $
xq_m(x) = 1-p_m(x) \leq 1$  for $x\in[0,\eps]\,.$  If $m=0$, we have $p_0 \equiv 1$ and $q_m\equiv 0$, so that the above is also trivially
satisfied for any $x$.

{\em Second summand:} first subcase, $\mu > 1$, using \eqref{eq:controlpowerop},
and the fact that $\abs{p_m}(x) \leq 1$ for $x\in[0, \eps]$:
\begin{eqnarray*}
(II)&=& \norm{F_{\eps} (S_n+\lambda I)^{\frac{1}{2}}(\bar{f}_m-f^*_\cH)}\\
& = & \norm{F_{\eps} (S_n+\lambda I)^{\frac{1}{2}}p_m(S_n)S^\mu w} \\
& \leq &\Bigg( \norm{F_{\eps} (S_n+\lambda I)^{\frac{1}{2}}p_m(S_n)S_n^{\mu}} + \norm{F_\eps(S_n+\lambda I)^{\frac{1}{2}}p_m(S_n)}c(\mu)
  \kappa^{\mu}\Delta \Bigg) \kappa^{-\mu-\frac{1}{2}}\rho\\
& \leq& \paren{\eps^{\mu+\frac{1}{2}} + \lambda^{\frac{1}{2}} \eps^\mu
  + c(\mu)
  \kappa^{\mu} \paren{\eps^\frac{1}{2} +
    \lambda^{\frac{1}{2}}} \Delta} \kappa^{-\mu-\frac{1}{2}} \rho\\
&\leq& c(\mu)\paren{\eps^\frac{1}{2} +
    \lambda^{\frac{1}{2}}}\paren{\eps^{\mu} + \kappa^\mu \Delta }\kappa^{-\mu-\frac{1}{2}}\rho\,.
\end{eqnarray*}
Bounding the second summand: second subcase, $\mu \leq 1$:
\begin{align*}
(II)
 =  \norm{F_{\eps} (S_n+\lambda I)^{\frac{1}{2}}p_m(S_n)S^\mu w}
& \leq \norm{F_{\eps} (S_n+\lambda I)^{\mu+\frac{1}{2}}p_m(S_n)}\Upsilon^2
\kappa^{-\mu-\frac{1}{2}}\rho\\
& \leq c(\mu) (\eps + \lambda)^{\mu+\frac{1}{2}}
\Upsilon^2 \kappa^{-\mu-\frac{1}{2}} \rho\,.
\end{align*}
{\em Third summand:}
\begin{align*}
(III) &= \norm{F_{\eps}^\perp (S_n+\lambda I)^{\frac{1}{2}}(f_m-f^*_\cH)}\leq \norm{F_{\eps}^\perp S_n^{\frac{1}{2}}(f_m-f^*_\cH)} +
\lambda^{\frac{1}{2}} \norm{F_{\eps}^\perp (f_m-f^*_\cH)}\\
& \leq \paren{\frac{(\eps + \lambda)^{\frac{1}{2}}}{\eps^{\frac{1}{2}}} +
  \lambda^{\frac{1}{2}} \frac{(\eps+\lambda)^{\frac{1}{2}}}{\eps}}
\norm{ F_{\eps}^\perp (S_n+\lambda I)^{-\frac{1}{2}}S_n(f_m-f^*_\cH)}\\
&
\leq \paren{1+\frac{\lambda^{\frac{1}{2}}}{\eps^{\frac{1}{2}}}} \paren{1+\frac{\lambda}{\eps}}^{\frac{1}{2}}
\norm{  F_{\eps}^\perp(S_n+\lambda I)^{-\frac{1}{2}}S_n(f_m-f^*_\cH)}\\
& \leq  \paren{1+\frac{\lambda^{\frac{1}{2}}}{\eps^{\frac{1}{2}}}} \paren{1+\frac{\lambda}{\eps}}^{\frac{1}{2}}
\paren{
\norm{F_{\eps}^\perp (S_n+\lambda I)^{-\frac{1}{2}}T_n^*(T_nf_m -\bY)} +
\norm{(S_n+\lambda I)^{-\frac{1}{2}}(T_n^*\bY - S_n f^*_\cH)}}\\
& \leq \paren{1+\frac{\lambda^{\frac{1}{2}}}{\eps^{\frac{1}{2}}}}
\eps^{-\frac{1}{2}}\norm{T_n^*(T_nf_m -\bY)} +
\sqrt{2} \Upsilon \paren{1+\frac{\lambda}{\eps}} \norm{(S+\lambda I)^{-\frac{1}{2}}(T_n^*\bY - S_n f^*_\cH)}\\
&\leq \paren{1+\frac{\lambda^{\frac{1}{2}}}{\eps^{\frac{1}{2}}}}
\eps^{-\frac{1}{2}} \norm{T_n^*(T_nf_m -\bY)} +
\sqrt{2} \Upsilon\paren{1+\frac{\lambda}{\eps}} \delta(\lambda)\,.
\end{align*}
\end{proof}
We now consider the sequence of polynomials that are orthogonal
with respect to the scalar product $[.,.]_{(2)}$, which we denote by
$p_m^{(2)}$, and its roots by $x_m^{(2)}$.

\begin{lemma}
\label{le:relpol}
For any $\lambda>0$\,, if assumptions {\bf SC($r$)}, {\bf
  B1($\lambda$)}, {\bf B2($\lambda$)} and {\bf B3}
hold, then for any iteration step $1 \leq m \leq m_{final}$, for any
$\eps \in (0,x_{1,m-1})$:
\begin{align}
\brac{p_{m-1},p_{m-1}}_{(0)}^{\frac{1}{2}} &
= \norm{p_{m-1}(S_n)T_n^*\bY} \nonumber \\
& \leq \Upsilon (\eps+\lambda)^\frac{1}{2}
\delta(\lambda) + c(\mu)\Upsilon^2\eps\paren{\eps^\mu +
Z_\mu(\lambda)  }\kappa^{-\mu-\frac{1}{2}}\rho +
\eps^{-\frac{1}{2}}\brac{p_{m-1}^{(2)},p_{m-1}^{(2)}}^{\frac{1}{2}}_{(1)}\,.
\label{eq:le3}
\end{align}
\end{lemma}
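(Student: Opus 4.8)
The plan is to combine the variational characterisation of the conjugate gradient residual polynomial with a splitting of the spectral integral at the level $\eps$. Recall from the preceding discussion that $p_{m-1}$ is the minimiser of $\brac{p,p}_{(0)}=\norm{p(S_n)T_n^*\bY}^2$ over the affine set $\cP^0_{m-1}$. Since $p^{(2)}_{m-1}\in\cP^0_{m-1}$ as well, this immediately yields the comparison
\[
\brac{p_{m-1},p_{m-1}}_{(0)}\ \le\ \brac{p^{(2)}_{m-1},p^{(2)}_{m-1}}_{(0)}\ =\ \int_0^\kappa \paren{p^{(2)}_{m-1}(u)}^2\, d\norm{F_{u,n}T_n^*\bY}^2 .
\]
I would split the right-hand integral at $u=\eps$. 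On $(\eps,\kappa]$ one has $1\le u/\eps$, hence $\int_{(\eps,\kappa]}\paren{p^{(2)}_{m-1}}^2 d\norm{F_{u,n}T_n^*\bY}^2\le \eps^{-1}\int_0^\kappa u\paren{p^{(2)}_{m-1}}^2 d\norm{F_{u,n}T_n^*\bY}^2=\eps^{-1}\brac{p^{(2)}_{m-1},p^{(2)}_{m-1}}_{(1)}$, which after a square root contributes the last term of \eqref{eq:le3}.

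For the low-frequency part $\int_{[0,\eps]}\paren{p^{(2)}_{m-1}}^2 d\norm{F_{u,n}T_n^*\bY}^2$, the crucial observation is that $p^{(2)}_{m-1}$ takes values in $(0,1]$ on $[0,\eps]$. Writing $p^{(2)}_{m-1}(x)=\prod_{k}\paren{1-x/x^{(2)}_{k}}$ with all roots $x^{(2)}_k$ positive, it is enough to know that the smallest root of $p^{(2)}_{m-1}$ is at least $x_{1,m-1}>\eps$; this is precisely the interlacing/monotonicity property of the orthogonal polynomial systems attached to the scalar products $\brac{\cdot,\cdot}_{(i)}$ as $i$ grows (the $p^{(2)}_m$ being the kernel polynomials of the $p_m$; see \cite{Hanke95}), which moves the roots to the right. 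Then every factor $1-u/x^{(2)}_k$ lies in $(0,1]$ for $u\in[0,\eps]$, so $\int_{[0,\eps]}\paren{p^{(2)}_{m-1}}^2 d\norm{F_{u,n}T_n^*\bY}^2\le \int_{[0,\eps]}d\norm{F_{u,n}T_n^*\bY}^2 = \norm{\Pi_\eps T_n^*\bY}^2$, where $\Pi_\eps:=\mathbf{1}_{[0,\eps]}(S_n)$.

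It remains to bound $\norm{\Pi_\eps T_n^*\bY}$, which is done exactly as the ``second summand'' of Lemma \ref{le:lemma2}, with the polynomial factor replaced by the constant $1$. Splitting $T_n^*\bY=S_nf^*_\cH+(T_n^*\bY-S_nf^*_\cH)$, the noise part is controlled using {\bf B1} and {\bf B2},
\[
\norm{\Pi_\eps(T_n^*\bY-S_nf^*_\cH)}\le \norm{\Pi_\eps(S_n+\lambda I)^{\frac12}}\,\norm{(S_n+\lambda I)^{-\frac12}(S+\lambda I)^{\frac12}}\,\delta(\lambda)\le (\eps+\lambda)^{\frac12}\Upsilon\,\delta(\lambda),
\]
while for the signal part I insert the source condition $f^*_\cH=S^\mu w$, $\norm{w}\le\kappa^{-\mu-\frac12}\rho$, and distinguish $\mu\le1$ (interpolating $S^\mu$ against $(S_n+\lambda I)^\mu$, using {\bf B2} and $(\eps+\lambda)^\mu\le\eps^\mu+\lambda^\mu$) from $\mu>1$ (writing $S^\mu=S_n^\mu+(S^\mu-S_n^\mu)$ and using {\bf B3} together with \eqref{eq:controlpowerop}), obtaining in both cases $\norm{\Pi_\eps S_nf^*_\cH}\le c(\mu)\Upsilon^2\eps\paren{\eps^\mu+Z_\mu(\lambda)}\kappa^{-\mu-\frac12}\rho$. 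Adding the two contributions, and combining with the high-frequency estimate via $\sqrt{a+b}\le\sqrt a+\sqrt b$, produces \eqref{eq:le3}; the case $m=1$ (where $p_0\equiv1$ has no root and $\eps$ is arbitrary) is the same splitting and is trivial.

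The step I expect to be the main obstacle is the justification that $\eps$ lies below the smallest root of the kernel polynomial $p^{(2)}_{m-1}$ — equivalently, that raising the exponent $i$ in $\brac{\cdot,\cdot}_{(i)}$ shifts the roots of the associated orthogonal polynomials to the right — which is exactly what lets $p^{(2)}_{m-1}$ be dominated by $1$ on $[0,\eps]$; the remaining estimates are routine operator-calculus bookkeeping already carried out in Lemmas \ref{le:lemma1}--\ref{le:lemma2}. A secondary technicality is the endpoint convention for the spectral projector: the measure $d\norm{F_{u,n}T_n^*\bY}^2$ is carried by the eigenvalues $\le\eps$, whereas in our convention $F_\eps$ is the projector onto eigenvalues strictly below $\eps$, which is why I use $\Pi_\eps=\mathbf{1}_{[0,\eps]}(S_n)$; the bounds $\norm{\Pi_\eps S_n(S_n+\lambda I)^\mu}\le\eps(\eps+\lambda)^\mu$, $\norm{\Pi_\eps(S_n+\lambda I)^{\frac12}}\le(\eps+\lambda)^{\frac12}$ and the like are unaffected by this choice.
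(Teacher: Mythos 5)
Your proposal is correct and follows essentially the same route as the paper's own proof: the optimality comparison of $p_{m-1}$ with $p^{(2)}_{m-1}$, the spectral split at $\eps$ (high frequencies absorbed into $\eps^{-1/2}\brac{p^{(2)}_{m-1},p^{(2)}_{m-1}}_{(1)}^{1/2}$), the bound $|p^{(2)}_{m-1}|\leq 1$ on $[0,\eps]$ via the root-interlacing property $x_{1,m-1}\leq x^{(2)}_{1,m-1}$ from Hanke (Cor.~2.7), and the noise/signal decomposition of $\norm{F_\eps T_n^*\bY}$ with the two cases $\mu\leq 1$ and $\mu>1$. The interlacing step you flag as the main obstacle is indeed exactly the ingredient the paper invokes, and your handling of the spectral-projector endpoint convention is a harmless variant.
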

\begin{proof}
By the optimality property defining our CG algorithm,
\begin{align*}
\norm{p_{m-1}(S_n)T_n^*\bY} &
\leq \norm{p^{(2)}_{m-1}(S_n)T_n^*\bY}\leq \norm{F_{\eps}p^{(2)}_{m-1}(S_n)T_n^*\bY } + \norm{F_\eps^\perp
p^{(2)}_{m-1}(S_n)T_n^*\bY}\\
& \leq \norm{F_\eps T_n^* \bY} + \eps^{-\frac{1}{2}}
\norm{p^{(2)}_{m-1}(S_n)S_n^{\frac{1}{2}}T_n^*\bY} = \norm{F_\eps T_n^* \bY} +\eps^{-\frac{1}{2}}\brac{p_{m-1}^{(2)},p_{m-1}^{(2)}}^{\frac{1}{2}}_{(1)}
\end{align*}
For the last inequality, we have used the fact that
$|p^{(2)}_{m-1}|(x) \leq 1$ for $x \in [0,x_{m-1}^{(2)}]$\,, along
with the assumption $0 < \eps < x_{1,m-1} \leq
x_{1,m-1}^{(2)}$\,; the latter inequality is due to interlacing
properties of the roots of orthogonal polynomials for $[.,.]_{(i)}$
and $[.,.]_{(i+1)}$\, (see \cite{Hanke95}, Cor 2.7).
We now bound
\begin{align*}
\norm{F_\eps T_n^* \bY } & \leq \norm{F_\eps (T_n^* \bY - S_n
  f^*_\cH)} + \norm{F_\eps S_n S^\mu w} \\
& \leq \norm{F_\eps (S_n+\lambda I)^\frac{1}{2}}
\norm{(S_n+\lambda I)^{-\frac{1}{2}}(T_n^*\bY - S_n f^*_\cH)}
+ \norm{F_\eps S_n S^\mu w} \\
& \leq \Upsilon(\eps+\lambda)^{\frac{1}{2}}\delta(\lambda) + \norm{F_\eps S_n S^\mu w}\,;
\end{align*}
for the second term, we divide as usual into two cases: for $\mu > 1$:
\[
\norm{F_\eps S_n S^\mu w} \leq \norm{F_\eps S_n^{\mu+1}w} + \norm{F_\eps S_n(S_n^\mu - S^\mu)w}
\leq \eps c(\mu) \paren{ \eps^\mu + \kappa^{\mu} \Delta} \kappa^{-\mu-\frac{1}{2}}\rho\,,
\]
and for $\mu \leq 1$:
\[
\norm{F_\eps S_n S^\mu w} \leq \norm{F_{\eps}S_n(S_n+\lambda I)^\mu} \Upsilon^2 \kappa^{-\mu-\frac{1}{2}}\rho
\leq \eps(\eps^\mu + \lambda^\mu) \Upsilon^2 \kappa^{-\mu-\frac{1}{2}}\rho\,.
\]
\end{proof}

\subsection{Proof of Theorem 2.2}

We fix
\begin{equation}
\label{eq:lambdastar}
\lambda_* = \paren{\left(4D/\sqrt{n}\right) \log \left(6/\gamma\right)}^{\frac{2}{2\mu+s+1}} \kappa\,.
\end{equation}
and assume $n$ is big enough to ensure $\lambda_* \leq \kappa$\,. Furthermore we denote
$\blambda_* = \kappa^{-1} \lambda_*$ (this normalization was introduced in \cite{Cap06}).

We rewrite equivalently the discrepancy stopping rule as follows: for some fixed $\tau>0$\,,
\begin{equation}
\label{eq:apriorisr}
\wh{m} := \min\set{0 \geq m : \norm{T_n^*(T_n f_m - \bY)} \leq
  (2+\tau)\lambda_*^{\frac{1}{2}} \delta(\lambda_*)}\,,
\end{equation}
where
\begin{equation}
\label{eq:deltastar}
\delta(\lambda_*):= \frac{3}{4} M \blambda_*^{\mu+\frac{1}{2}}\,.
\end{equation}
(Observe that the above
$\tau>0$ is deduced from the constant $\tau'>3/2$ considered in the main part of the paper
via $\tau = \frac{4}{3}(\tau - \frac{3}{2})$.)


We first check {\bf B1($\lambda_*$)}, {\bf
  B2($\lambda_*$)} and {\bf B3} are satisfied simultaneously with
large probability, using for this concentration
results which are recalled in Section \ref{se:technical}. Concerning {\bf B1($\lambda_*$)}\,,
inequality \eqref{eq:bybn} ensures that with probability
$1-\gamma$\,, we have
\begin{align}
\norm{(S+\lambda_* I)^{-\frac{1}{2}}(T_n^* \bY - S_n f^*_{\cH})} & \leq
 2M\paren{\sqrt{\frac{\cN(\lambda_*)}{n}} + \frac{2\sqrt{\kappa}}{\sqrt{\lambda_*}n}}
\log \frac{6}{\gamma} \nonumber \\
& \leq \frac{2M}{\sqrt{n}} D
\blambda_*^{-\frac{s}{2}} \paren{1 +
  \frac{1}{2D^2}\paren{\frac{4D}{\sqrt{n}} \log \frac{6}{\gamma}}
\blambda_*^{\frac{s-1}{2}}} \log
\frac{6}{\gamma} \nonumber \\
& \leq \frac{M}{2}
\blambda_*^{\mu+\frac{1}{2}} \paren{1+\frac{1}{2D^2}\blambda_*^{\mu+s}}
\nonumber \\
&  \leq \frac{3}{4} M \blambda_*^{\mu+\frac{1}{2}}= \delta(\lambda_*)\,,
\label{eq:defdelta1}
\end{align}
where we have used {\bf SC($r$)}, \eqref{eq:lambdastar} and the
assumptions $D\geq 1$ and $\blambda_* \leq 1$\,.
We now turn to {\bf B2($\lambda_*$)}\,. Inequality \eqref{eq:reloperror}
along with a repetition of the above reasoning yields that with
probability $1-\gamma$:
\[
\norm{(S+\lambda_* I)^{-\frac{1}{2}}(S_n-S)}_{HS} \leq
\frac{\sqrt{\kappa}}{M} \delta(\lambda_*)\,,
\]
so that
\[
\norm{(S+\lambda_*I)^{-\frac{1}{2}}(S_n-S)(S+\lambda_* I)^{-\frac{1}{2}}} \leq
\frac{\sqrt{\kappa}}{M} \lambda_*^{-\frac{1}{2}} \delta(\lambda_*)\,.
\]
Observe that
\begin{equation}
\frac{\sqrt{\kappa}}{M} \lambda_*^{-\frac{1}{2}} \delta(\lambda_*) =
\frac{3}{4} \blambda_*^\mu \leq \frac{3}{4}\,,
\end{equation}
so that with Lemma \ref{prop:relopmult}, we obtain that {\bf B2($\lambda_*$)} is satisfied
with $\Upsilon:=2$\, (with probability $1-\gamma$). Finally,
equation (11) in the main paper implies that {\bf (B3)} is also satified with
probability $1-\gamma$, with
\begin{equation}
\label{eq:defDelta}
\Delta := \frac{2}{\sqrt{n}}\log \frac{1}{\gamma}\,.
\end{equation}
To conclude, by the union bound, the event that {\bf B1($\lambda_*$)}, {\bf
  B2($\lambda_*$)} and {\bf B3} satisfied simultaneously has probability larger
than $1-3\gamma$\,, and we assume for the rest of the proof that we
are on this event.

We will assume $\wh{m}\geq 1$\, for the remainder of the proof and postpone
to the end the (simpler) case $\wh{m}=0$.

{\bf First step:} upper bound on $\abs{p'_{\wh{m}-1}(0)}$\,. By definition of the stopping rule we
have $\norm{T_n^*(T_n f_{\wh{m}-1} -\bY)} > (2 + \tau)
\lambda_*^\frac{1}{2} \delta(\lambda_*)$\,. Now applying this together
with the upper bound of Lemma \ref{le:lemma1} we get
\begin{align*}
\tau \lambda_*^\frac{1}{2} \delta(\lambda_*)
& \leq  c(\mu) \paren{\abs{p'_{\wh{m}-1}(0)}^{-(\mu+1)}
  + Z_\mu(\lambda_*)\abs{p'_{\wh{m}-1}(0)}^{-1}}
\kappa^{-\mu-\frac{1}{2}} \rho + 2 \abs{p'_{\wh{m}-1}(0)}^{-\frac{1}{2}} \delta(\lambda_*)\,\\
& \leq 3 \max \paren{2 \abs{p'_{\wh{m}-1}(0)}^{-\frac{1}{2}}
  \delta(\lambda_*), c(\mu) \rho \kappa^{-\mu-\frac{1}{2}}
    \abs{p'_{\wh{m}-1}(0)}^{-(\mu+1)}, c(\mu) \rho \kappa^{-\mu-\frac{1}{2}}
    Z_\mu(\lambda_*)\abs{p'_{\wh{m}-1}(0)}^{-1} }\,.
\end{align*}
We examine in succession the possibility that the maximum in the above
expression is attained for each of the terms which comprise it. If the
first term attains the maximum, this implies $|p'_{\wh{m}-1}(0)| \leq (9/\tau^2) \lambda_*^{-1}\,.$
If the second term attains the maximum, this entails
\[
c(\mu) \rho \kappa^{-\mu-\frac{1}{2}}
    \abs{p'_{\wh{m}-1}(0)}^{-(\mu+1)} \geq \tau \lambda_*^\frac{1}{2} \delta(\lambda_*)\,,
\]
which using \eqref{eq:deltastar} yields:
\[
\abs{p'_{\wh{m}-1}(0)} \leq
c(\mu,\tau) \paren{\frac{\rho}{M}}^\frac{1}{\mu+1} \lambda_*^{-1}\,.
\]
Finally, if the third term attains the maximum, we have
\[
c(\mu) \rho Z_\mu(\lambda_*) \kappa^{-\mu-\frac{1}{2}}
    \abs{p'_{\wh{m}-1}(0)}^{-1} \geq \tau \lambda_*^\frac{1}{2} \delta(\lambda_*)\,,
\]
which using \eqref{eq:deltastar} yields:
\[
\abs{p'_{\wh{m}-1}(0)} \leq
c(\mu,\tau) \frac{\rho}{M} \lambda_*^{-\mu-1}
Z_\mu(\lambda_*)\,.
\]
We now establish the inequality
\begin{equation}
\label{eq:boundZ}
Z_\mu(\lambda_*) \lambda_*^{-\mu} \leq 1\,.
\end{equation}
The inequality is trivial if $\mu \leq 1$ given the definition of $Z_\mu(\lambda_*)$ in \eqref{eq:defZ}.
If $\mu >1$ holds, from the definition \eqref{eq:defDelta}, it holds that $\Delta \leq
\frac{1}{2} \blambda_*^{\frac{2\mu+s+1}{2}}$\,, hence
\[
Z_\mu(\lambda_*) \lambda_*^{-\mu} =
\Delta \blambda_*^{-\mu}
\leq \frac{1}{2} \blambda_*^{\frac{s+1}{2}}
\leq \frac{1}{2}\,.
\]
Gathering all three cases, we obtain that it always holds that
\begin{equation}
\label{eq:boundppmm1}
\abs{p'_{\wh{m}-1}(0)} \leq c(\mu,\tau) \max\paren{\frac{\rho}{M},1} \lambda_*^{-1}\,.
\end{equation}

{\bf Second step:} upper bound on $\abs{p'_{\wh{m}}(0)}$\,. For this we use the result of the first step and relate $\abs{p'_{\wh{m}-1}(0)}$ to $\abs{p'_{\wh{m}}(0)}$\,.
It is a property of orthogonal polynomials (see Hanke, Corollary 2.6) that for any $m\geq 1$
\begin{equation}
\label{eq:pcompar}
{{p_{m-1}}'(0)} - {{p_{m}}'(0)} =
\frac{\brac{p_{m-1},p_{m-1}}_{(0)} -
  \brac{p_{m},p_{m}}_{(0)} }{\brac{p^{(2)}_{m-1},p^{(2)}_{m-1}}_{(1)}}
\leq \frac{\brac{p_{m-1},p_{m-1}}_{(0)}}{\brac{p^{(2)}_{m-1},p^{(2)}_{m-1}}_{(1)}}\,.
\end{equation}
To upper bound the above quantity, we apply Lemma \ref{le:relpol}
whithe the choice $\lambda=\lambda_*$ and
\[
\eps = \eps_* := a(\mu,\tau) \min \paren{\frac{M}{\rho},1} \lambda_*\,,
\]
where $0< a(\mu,\tau)\leq 1$ should be chosen small enough in order to satisfy some
constraints to be specified below. The first constraint is the
requirement $\eps_* \in (0,x_{1,m-1})$ in order to apply Lemma
\ref{le:relpol}. For this, it can be seen from \eqref{eq:boundppmm1}
that $a(\mu,\tau)$ can be chosen small enough to ensure
\[
\eps_* \leq \abs{p'_{m-1}(0)}^{-1} \leq x_{1,m-1}\,,
\]
the last inequality is an easy consequence of the fact that $p_{m-1}$
has exactly $(m-1)$ positive real roots and $p_{m-1}(0)=1$\,. We now
turn to upper bound the following quantity appearing on the RHS of \eqref{eq:le3}:
\begin{multline}
\Upsilon(\eps_*+\lambda_*)^\frac{1}{2}
\delta(\lambda_*) + c(\mu)\Upsilon^2\eps_*\paren{\eps_*^\mu +
  Z_\mu(\lambda_*)}\kappa^{-\mu-\frac{1}{2}}\rho\\
\begin{aligned}
& \leq 2(a(\mu,\tau)+1)\lambda_*^{\frac{1}{2}}\delta(\lambda_*)
+ c(\mu)a(\mu,\tau)\min \paren{{\rho},M} \lambda_* \blambda_*^\mu
\kappa^{-\frac{1}{2}}\rho\\
& \leq  (c(\mu)a(\mu,\tau)+2)\lambda_*^{\frac{1}{2}}\delta(\lambda_*)\,,
\end{aligned}
\label{eq:beps}
\end{multline}
where we have used the definition \eqref{eq:deltastar} for
$\delta(\lambda_*)$ and inequality $Z_\mu(\lambda_*) \leq
\lambda_*^{\mu}$, see \eqref{eq:boundZ}\,.
Now, we chose $a(\mu,\tau)$ so that the factor in the last display
satisfies $c(\mu)a(\mu,\tau)\leq \frac{\tau}{2}$\,.
Remember that the definition of
the stopping rule entails
\begin{equation}
\label{eq:resr}
\brac{p_{m-1},p_{m-1}}_{(0)}^{\frac{1}{2}} = \norm{T_n^*(T_n f_{\wh{m}-1} -\bY)} > (2 + \tau)
\lambda_*^\frac{1}{2} \delta(\lambda_*) > (2+\tau)\lambda_*^{\frac{1}{2}}\delta(\lambda_*)\,,
\end{equation}
Now combining
\eqref{eq:le3}, \eqref{eq:resr} and \eqref{eq:beps}, we obtain
\[
\paren{1-\frac{\tau+\frac{1}{2}}{\tau+2}}\brac{p_{m-1},p_{m-1}}_{(0)}^{\frac{1}{2}} \leq
\eps_*^{-\frac{1}{2}} \brac{p_{m-1}^{(2)},p_{m-1}^{(2)}}_{(1)}^{\frac{1}{2}}\,;
\]
using this inequality in relation with \eqref{eq:pcompar} and \eqref{eq:boundppmm1}, we obtain
\begin{align*}
\abs{p_{\wh{m}}'(0)} & \leq \abs{p_{\wh{m}-1}'(0)} + c(\tau)\eps_*^{-1} \leq c(\mu,\tau) \max\paren{\frac{\rho}{M},1} \lambda_*^{-1}\,.
\end{align*}

{\bf Final step.} We apply Lemma \ref{le:lemma2} (with
$\lambda=\lambda_*$ and $\eps=\eps_*$), together with the bound on
$\abs{p_{\wh{m}}'(0)}$ just obtained, and the inequality (by
definition of the stopping rule)
\[
\norm{T_n^*(T_nf_{\wh{m}}-\bY)} \leq (2+\tau) \lambda_*^{\frac{1}{2}} \delta(\lambda_*)\,,
\]
obtaining, using again \eqref{eq:boundZ}:
\begin{align*}
\norm{f_{\wh{m}} - f^*}_2 & = \norm{T(f_{\wh{m}}-f^*_\cH)}\\
& \leq c(\mu,\tau)\paren{ \delta(\lambda_*)
\max\paren{\frac{\rho}{M},1} + \min(\rho,M)\blambda_*^{\mu+\frac{1}{2}}} \leq c(\mu,\tau) (M+\rho)\blambda_*^{\mu+\frac{1}{2}}\,.
\end{align*}
If $\wh{m}=0$, we can apply directly Lemma \ref{le:lemma2} as above
without requiring the two previous steps, since
in this case $p'_0(0)=0$, so that we obtain the same final bound.



\subsection{Sketch of the proof of Theorem 2.3}

For the proof of Theorem 2.3, the condition  {\bf B1($\lambda$)} is replaced by
\begin{itemize}
\item[{\bf B1'($\lambda$)}]
$\displaystyle \norm{(S+\lambda I)^{-\frac{1}{2}}(T_n^*\bY-T^*f^*)} \leq \delta(\lambda)$\,.
\end{itemize}

We check that {\bf B1'($\lambda_*$)}, {\bf B2($\lambda_*$)} and {\bf B3} are satisfied in
the setting of Theorem 2.3.
To check {\bf B1'($\lambda_*$)}, we use \eqref{eq:byby}
instead of \eqref{eq:bybn}. Since the easily checked relation
$T_n^*\bY = T^*_{\tilde n} \wt{\bY}$ holds,
the upper bound obtained here has the same form as
for Theorem 2.2, therefore we can use the same value $\delta(\lambda^*)$ for
condition {\bf B1'($\lambda_*$)} as in the previous section, given by
\eqref{eq:defdelta1}. Notice however that we must now use the condition $\mu+s = r + s - \frac{1}{2} \geq 0$
to ensure that the chain of inequalities leading to \eqref{eq:defdelta1} is valid.

For condition {\bf B2($\lambda_*$)}, we can apply the deviation inequality \eqref{eq:reloperror}
but with $n$ replaced by $\wt{n}$, since we make use of all the unlabeled data. Using the fact that $\frac{n}{\tilde n} \leq {\blambda_*}^{-(1-2r)_+}$
and some elementary algebra leads to {\bf B2($\lambda_*$)} being
satisfied with $\Upsilon:=2$.

Finally condition {\bf B3} is satisfied with $\Delta$ given by
\eqref{eq:defDelta} with $n$ replaced by ${\tilde n}$.

Once these conditions are established, intermediate results similar
in structure to Lemmas \ref{le:lemma1}, \ref{le:lemma2}
and \ref{le:relpol} can be derived, but where {\bf B1($\lambda$)} is replaced by {\bf B1'($\lambda$)}.
The details are omitted here.

\subsection{More technical lemmas}
\label{se:technical}

In this section we collect some technical lemmas which
underpin the main results.  These  are taken from
previous sources and are recalled here for completeness.
The main statistical tool is the following deviation inequality:



\begin{lemma}
\label{prop:relconc}
Let $\lambda$ be a positive number. Under assumption {\bf (Bounded)}, the following holds:
\begin{equation}
\label{eq:byby}
\prob{\norm{(S+\lambda I)^{-\frac{1}{2}}(T_n^* \bY - T^*f^*)} \leq
2M\paren{\sqrt{\frac{\cN(\lambda)}{n}} + \frac{2\sqrt{\kappa}}{\sqrt{\lambda}n}}
\log \frac{6}{\gamma} }
\geq 1-\gamma\,.
\end{equation}
If the representation $f^*=Tf^*_{\cH}$ holds and under assumption {\bf (Bernstein)},
we have the following:
\begin{equation}
\label{eq:bybn}
\prob{\norm{(S+\lambda I)^{-\frac{1}{2}}(T_n^* \bY - S_n f^*_{\cH})} \leq
2M\paren{\sqrt{\frac{\cN(\lambda)}{n}} + \frac{2\sqrt{\kappa}}{\sqrt{\lambda}n}}
\log \frac{6}{\gamma}}
\geq 1-\gamma\,.
\end{equation}
Finally, the following holds:
\begin{equation}
\label{eq:reloperror}
\prob{\norm{(S+\lambda I)^{-\frac{1}{2}}(S_n-S)}_{HS} \leq
2\sqrt{\kappa} \paren{ \sqrt{\frac{\cN(\lambda)}{ n}} + \frac{2\sqrt{\kappa}}{\sqrt{\lambda} n}}
\log \frac{6}{\gamma} }
\geq 1-\gamma\,,
\end{equation}
where we recall that $\norm{.}_{HS}$ denotes the Hilbert-Schmidt norm.
\end{lemma}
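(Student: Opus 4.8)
The plan is to obtain all three inequalities as instances of one Bernstein-type deviation bound for i.i.d.\ averages of random elements of a separable Hilbert space (for \eqref{eq:byby} and \eqref{eq:bybn} the space is $\cH$; for \eqref{eq:reloperror} it is the Hilbert space of Hilbert--Schmidt operators on $\cH$), exactly along the lines of \cite{CapDeV07,Cap06} and ultimately resting on the vector Bernstein inequality of \cite{PinSak85,Yur95}. The template I would invoke is: if $\zeta_1,\dots,\zeta_n$ are i.i.d.\ zero-mean with $\e{\norm{\zeta}^p} \leq \frac{1}{2} p!\, \sigma^2 H^{p-2}$ for all integers $p\geq 2$ (which in particular holds with $H$ any a.s.\ bound on $\norm{\zeta}$ and $\sigma^2 = \e{\norm{\zeta}^2}$, since then $\norm{\zeta}^p \leq H^{p-2}\norm{\zeta}^2$ and $\frac{1}{2}p!\geq 1$), then for every $\eta\in(0,1)$, with probability at least $1-\eta$,
\[
\norm{\tfrac{1}{n}\sum_{i=1}^n \zeta_i} \leq 2\paren{\frac{H}{n} + \frac{\sigma}{\sqrt{n}}}\log\frac{2}{\eta}\,.
\]
Taking $\eta = \gamma/3$ converts $\log(2/\eta)$ into $\log(6/\gamma)$, which accounts for the constant appearing in the statements.

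The one computation shared by all three cases concerns the ``whitened'' feature map. Since $S = \e{k(X,\cdot)\otimes k(X,\cdot)}$ on $\cH$ and the kernel is bounded by \eqref{boundedk}, for $P_X$-almost every $x$,
\[
\norm{(S+\lambda I)^{-\frac{1}{2}}k(x,\cdot)}_\cH^2 = \inner{k(x,\cdot),(S+\lambda I)^{-1}k(x,\cdot)}_\cH \leq \frac{k(x,x)}{\lambda} \leq \frac{\kappa}{\lambda}\,,
\]
while taking the expectation and using that $S = T^*T$ and $K = TT^*$ have the same nonzero eigenvalues gives
\[
\e{\norm{(S+\lambda I)^{-\frac{1}{2}}k(X,\cdot)}_\cH^2} = \tr\paren{S(S+\lambda I)^{-1}} = \tr\paren{K(K+\lambda I)^{-1}} = \cN(\lambda)\,.
\]

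Next I would instantiate the three cases. (i) For \eqref{eq:byby}: since $\e{\eps\mid X}=0$ one has $T^*f^* = \e{Y k(X,\cdot)}$, so $T_n^*\bY - T^*f^* = \frac{1}{n}\sum_i\zeta_i$ with $\zeta_i = Y_i k(X_i,\cdot) - \e{Y k(X,\cdot)}$, i.i.d.\ and mean zero. Under {\bf (Bounded)}, $\norm{(S+\lambda I)^{-1/2}Y_i k(X_i,\cdot)} \leq M\sqrt{\kappa/\lambda}$ and its second moment is at most $M^2\cN(\lambda)$; centering only decreases the second moment and at most doubles the uniform bound, so the template applies with $H = 2M\sqrt{\kappa/\lambda}$ and $\sigma^2 = M^2\cN(\lambda)$, which reproduces the stated bound. (ii) For \eqref{eq:bybn}, when $f^* = Tf^*_\cH$ the reproducing property gives $S_n f^*_\cH = \frac{1}{n}\sum_i f^*_\cH(X_i)k(X_i,\cdot)$, hence $T_n^*\bY - S_n f^*_\cH = \frac{1}{n}\sum_i \eps_i k(X_i,\cdot)$, already zero-mean. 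With $\zeta_i = (S+\lambda I)^{-1/2}\eps_i k(X_i,\cdot)$, conditioning on $X_i$ and using {\bf (Bernstein)} (extended from even to all integer exponents via Cauchy--Schwarz between consecutive moments) yields
\[
\e{\norm{\zeta_i}^p} \leq \tfrac{1}{2}p!\,M^p\,\e{\norm{(S+\lambda I)^{-1/2}k(X,\cdot)}^p} \leq \tfrac{1}{2}p!\paren{M^2\cN(\lambda)}\paren{M\sqrt{\kappa/\lambda}}^{p-2}\,,
\]
so the Bernstein moment condition holds with $\sigma^2 = M^2\cN(\lambda)$, $H = M\sqrt{\kappa/\lambda}$ (weakening $H$ to $2M\sqrt{\kappa/\lambda}$ gives exactly the stated constants). (iii) For \eqref{eq:reloperror}, work in $HS(\cH)$: $S_n - S = \frac{1}{n}\sum_i \Xi_i$ with $\Xi_i = k(X_i,\cdot)\otimes k(X_i,\cdot) - S$; using $\norm{(S+\lambda I)^{-1/2}k(X,\cdot)\otimes k(X,\cdot)}_{HS} = \norm{(S+\lambda I)^{-1/2}k(X,\cdot)}_\cH\,\norm{k(X,\cdot)}_\cH$ together with the two displays above, the a.s.\ bound is $\kappa/\sqrt{\lambda}$ and the second moment at most $\kappa\cN(\lambda)$, so after centering the template applies with $H = 2\kappa/\sqrt{\lambda}$ and $\sigma^2 = \kappa\cN(\lambda)$.

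The main obstacle is case (ii): one must promote the scalar Bernstein moment bound on the noise to a Bernstein moment bound for the $\cH$-valued products $\eps_i k(X_i,\cdot)$ after whitening by $(S+\lambda I)^{-1/2}$. This requires (a) extending the hypothesis from even integer exponents to all integers, and (b) decoupling, by conditioning on $X_i$, the noise moments from the feature-map moments, so that the a.s.\ bound $\sqrt{\kappa/\lambda}$ enters only as the ``range'' factor $H$ while the effective dimension $\cN(\lambda)$ enters only through the variance proxy $\sigma^2$ --- it is precisely this separation that produces the sharp $\sqrt{\cN(\lambda)/n}$ leading term rather than the cruder $\sqrt{\kappa/(\lambda n)}$ that would come from a purely norm-based argument. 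Cases (i) and (iii) are comparatively routine because their summands are bounded, so the Bernstein moment condition is automatic.
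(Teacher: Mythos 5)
Your proof is correct and takes essentially the same route as the paper, which does not prove this lemma itself but refers to \cite{CapDeV07}: the argument there is precisely the Hilbert-space Bernstein inequality of \cite{PinSak85,Yur95} applied to the three whitened i.i.d.\ sums, with the variance proxy $\cN(\lambda)$ and range $\sqrt{\kappa/\lambda}$ computed exactly as in your key display, and your constants match the stated bounds.
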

The proof can be found in \cite{CapDeV07}, and is based on a
Bernstein-type inequality for random variables taking values in a Hilbert space,
as established in \cite{PinSak85,Yur95}.
Inequality \eqref{eq:reloperror} can be fruitfully combined with the following:
\begin{lemma}
\label{prop:relopmult}
Assume there exists $\eta>0$ such that the following inequality holds:
\[
\norm{(S+\lambda)^{-\frac{1}{2}}(S_n-S) (S+\lambda)^{-\frac{1}{2}}} < 1 - \eta\,,
\]
then
\[
\norm{(S+\lambda)^{\frac{1}{2}}(S_n + \lambda)^{-\frac{1}{2}}} \leq \frac{1}{\sqrt{\eta}}.
\]
\end{lemma}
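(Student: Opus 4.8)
The plan is to transfer everything onto the ``whitened'' scale defined by $A := (S + \lambda I)^{\frac12}$. Since $\lambda>0$ and $S \succeq 0$, the operator $A$ is bounded, self-adjoint, positive and boundedly invertible, and $S_n + \lambda I \succeq \lambda I$ is likewise boundedly invertible, so every inverse written below is well defined. I would set
\[
B := (S+\lambda I)^{-\frac12}(S - S_n)(S+\lambda I)^{-\frac12} = A^{-1}(S - S_n)A^{-1},
\]
so that $\norm{B}$ is exactly the quantity appearing in the hypothesis; thus $\norm{B} < 1 - \eta$.

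The first step is the elementary identity
\[
A^{-1}(S_n + \lambda I)A^{-1} = A^{-1}(S+\lambda I)A^{-1} - A^{-1}(S - S_n)A^{-1} = I - B .
\]
Since $\norm{B} < 1-\eta < 1$, the Neumann series shows that $I - B$ is invertible with $\norm{(I-B)^{-1}} \leq (1-\norm{B})^{-1} < \eta^{-1}$; taking inverses in the identity above yields $A(S_n+\lambda I)^{-1}A = (I-B)^{-1}$, hence $\norm{A(S_n+\lambda I)^{-1}A} \leq \eta^{-1}$.

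The second step passes from this symmetric ``sandwich'' to the asymmetric operator norm in the statement. Put $C := (S+\lambda I)^{\frac12}(S_n+\lambda I)^{-\frac12}$. Because $S+\lambda I$ and $S_n + \lambda I$ are self-adjoint, $C^* = (S_n+\lambda I)^{-\frac12}(S+\lambda I)^{\frac12}$, and the $C^*$-identity $\norm{C}^2 = \norm{CC^*}$ gives
\[
\norm{C}^2 = \norm{(S+\lambda I)^{\frac12}(S_n+\lambda I)^{-1}(S+\lambda I)^{\frac12}} = \norm{A(S_n+\lambda I)^{-1}A} = \norm{(I-B)^{-1}} \leq \eta^{-1}.
\]
Taking square roots proves the lemma. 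One may also reach the same conclusion via the Heinz-type inequality used at \eqref{eq:multpert}, but the $C^*$-identity makes this unnecessary.

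I do not expect any real obstacle here: the argument is a few lines of operator algebra. The only points that need care are that $S_n + \lambda I$ is invertible (which holds because $\lambda > 0$), the validity of the Neumann-series bound (which needs only $\norm{B} < 1$), and the self-adjointness of $S + \lambda I$ and $S_n + \lambda I$, so that the adjoint of $C$ has the stated form and the $C^*$-identity applies.
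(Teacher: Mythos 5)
Your proof is correct and follows essentially the same route as the paper's: the $C^*$-identity $\norm{C}^2=\norm{CC^*}$ to reduce to the symmetric sandwich, the algebraic identity expressing that sandwich as $(I-B)^{-1}$, and the Neumann-series bound $\norm{(I-B)^{-1}}\leq(1-\norm{B})^{-1}$. Your write-up is in fact a cleaner rendering of the argument — the paper's displayed identity contains typographical slips (the exponents on $(S+\lambda)$ should be $-\tfrac12$ and the inner $(S-S_n)$ should not carry an inverse) which your version implicitly corrects.
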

\begin{proof}
First we have
\[
\norm{(S+\lambda)^{\frac{1}{2}}(S_n + \lambda)^{-\frac{1}{2}}}
= \norm{(S+\lambda)^{\frac{1}{2}}(S_n + \lambda)^{-1}(S+\lambda)^{\frac{1}{2}}}^{\frac{1}{2}};
\]
then simple algebraic manipulation shows
\[
(S+\lambda)^{\frac{1}{2}}(S_n + \lambda)^{-1}(S+\lambda)^{\frac{1}{2}}
= \paren{ I - (S+\lambda)^{\frac{1}{2}}(S - S_n)^{-1}(S+\lambda)^{\frac{1}{2}}}^{-1}.
\]
Finally, using the inequality $\norm{(I-A)^{-1}} = \norm{\sum_{k\geq
    0} A^k} \leq (1-\norm{A})^{-1}$ for $\norm{A} < 1$  yields the conclusion.
\end{proof}

We make use of the following operator inequalities:
\begin{lemma}
Let $A,B$ be two positive, self-adjoint operators with
$\max(\norm{A},\norm{B}) \leq C$\,. Then for any $r\geq 0$\,,
putting $\zeta=(r-1)_+$\,, the following inequality holds:
\begin{equation}
\label{eq:controlpowerop}
\norm{A^r - B^r} \leq (\zeta+1) C^{\zeta} \norm{A-B}^{r-\zeta}\,.
\end{equation}
\begin{proof}
Follows from the fact that the power function $x \mapsto x^r$ is
operator monotone for $r\leq 1$ and Lipschitz with constant $rC^{r-1}$
on $[0,C]$ if $r>1$.
\end{proof}
\end{lemma}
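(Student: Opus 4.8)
The plan is to treat separately the two regimes $0\leq r\leq 1$ and $r>1$, since the exponent $\zeta=(r-1)_+$ equals $0$ in the first case and $r-1$ in the second, and the inequality takes a qualitatively different form in each.

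Consider first $0\leq r\leq 1$, where $\zeta=0$ and the assertion reduces to the operator H\"older bound $\norm{A^r-B^r}\leq\norm{A-B}^r$. Writing $\delta:=\norm{A-B}$, self-adjointness of $A-B$ gives $-\delta I\leq A-B\leq\delta I$ in the operator order, hence $A\leq B+\delta I$. Because $t\mapsto t^r$ is operator monotone on $[0,\infty)$ for $r\in[0,1]$ (L\"owner), this yields $A^r\leq (B+\delta I)^r$; combining with the operator subadditivity $(X+Y)^r\leq X^r+Y^r$ for positive $X,Y$ and $r\in[0,1]$ (a consequence of operator concavity of $t\mapsto t^r$ together with $0^r=0$), applied to $X=B$ and $Y=\delta I$, gives $A^r\leq B^r+\delta^r I$, i.e.\ $A^r-B^r\leq\delta^r I$. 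Exchanging the roles of $A$ and $B$ gives the reverse inequality, so $\norm{A^r-B^r}\leq\delta^r$. Alternatively one may simply invoke the operator H\"older inequality, e.g.\ from \cite{Bat97}.

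For $r>1$ we have $\zeta=r-1$ and the claim becomes the Lipschitz-type bound $\norm{A^r-B^r}\leq r\,C^{r-1}\norm{A-B}$. When $r=m$ is a positive integer this is immediate from the telescoping identity $A^m-B^m=\sum_{k=0}^{m-1}A^k(A-B)B^{m-1-k}$, whose right-hand side has norm at most $\sum_{k=0}^{m-1}C^k C^{m-1-k}\norm{A-B}=mC^{m-1}\norm{A-B}$. For general $r>1$, after a routine approximation reducing to positive definite $A,B$, I would interpolate along the segment $X_t:=(1-t)B+tA$, $t\in[0,1]$, which stays positive with $\norm{X_t}\leq C$ by convexity, and write $A^r-B^r=\int_0^1 \tfrac{d}{dt}\,X_t^r\,dt$. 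Since $\tfrac{d}{dt}X_t^r$ is the Fr\'echet derivative of $X\mapsto X^r$ at $X_t$ applied to the tangent vector $A-B$, it then suffices to bound the norm of that derivative by $rC^{r-1}\norm{A-B}$ uniformly over positive $X$ with $\norm{X}\leq C$. In a joint eigenbasis the derivative acts by Schur multiplication with the divided-difference kernel $\psi(x,y)=(x^r-y^r)/(x-y)$, which on $[0,C]^2$ takes values in $[0,rC^{r-1}]$; the needed estimate is precisely the statement that $t\mapsto t^r$ is operator Lipschitz on $[0,C]$ with constant $rC^{r-1}=\sup_{[0,C]}rt^{r-1}$, a standard fact for power functions of exponent $\geq 1$ on a bounded interval (see \cite{Bat97}).

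The single delicate point is this operator-Lipschitz claim for non-integer $r>1$. Plain Lipschitz continuity of a function $f$ does \emph{not} in general imply an operator-norm bound $\norm{f(A)-f(B)}\leq\mathrm{Lip}(f)\,\norm{A-B}$ — the classical counterexample $f(t)=\abs{t}$ incurs a logarithmic loss — so one must exploit the extra regularity of the power function: its smoothness on the compact interval $[0,C]$, or, for $r\in[1,2]$, operator convexity of $t\mapsto t^r$ together with operator monotonicity of its derivative $t\mapsto rt^{r-1}$, in order to control the Schur-multiplier norm of $\psi$ by exactly $rC^{r-1}$ with no spurious universal factor. The remaining steps — the approximation, the interpolation identity, and assembling the two cases — are routine.
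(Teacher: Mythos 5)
Your proof is correct and follows essentially the same route as the paper's one-line argument: operator monotonicity of $t\mapsto t^r$ for $r\le 1$ (giving the H\"older-type bound via $A\le B+\norm{A-B}I$) and the operator-Lipschitz property of $t\mapsto t^r$ on $[0,C]$ for $r>1$. You are in fact more careful than the paper on the one delicate point -- that scalar Lipschitz continuity alone does not yield an operator-norm bound -- and the fact you defer to (operator Lipschitzness of $t^r$, $r>1$, on $[0,C]$ with constant of order $C^{r-1}$, provable e.g.\ via the integral representation of $t^{r-1}$ for non-integer $r$) is exactly what the paper also invokes without proof; the precise constant $(\zeta+1)C^{\zeta}$ is immaterial in any case, since every application of the lemma absorbs it into $c(\mu)$.
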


\begin{lemma}[\cite{Bat97}, Theorem IX.2.1-2]
Let $A,B$ be to self-adjoint, positive operators. Then for any $s\in[0,1]$:
\begin{equation}
\label{eq:multpert}
\norm{A^sB^s} \leq \norm{AB}^s\,.
\end{equation}
\end{lemma}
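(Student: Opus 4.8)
The plan is to prove this directly (the statement is the so-called Cordes inequality); the only non-elementary ingredient is the operator monotonicity of $u\mapsto u^s$ for $s\in[0,1]$, i.e.\ the L\"owner--Heinz inequality, which is precisely the property already invoked in the paper to establish \eqref{eq:controlpowerop}.

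I would first treat the case where $A$ is invertible. Here the operator $AB^2A=(BA)^*(BA)$ is positive and self-adjoint with $\norm{AB^2A}=\norm{BA}^2=\norm{AB}^2$, hence $AB^2A\leq\norm{AB}^2 I$. Conjugating this inequality by the bounded self-adjoint operator $A^{-1}$ gives $B^2\leq\norm{AB}^2 A^{-2}$; applying operator monotonicity of $u\mapsto u^s$ for $s\in[0,1]$ to the positive operators on both sides yields $B^{2s}\leq\norm{AB}^{2s}A^{-2s}$; and conjugating by $A^s$ then gives $A^s B^{2s}A^s\leq\norm{AB}^{2s} I$. Since $(A^s B^s)(A^s B^s)^*=A^s B^{2s}A^s$, taking operator norms gives $\norm{A^s B^s}^2=\norm{A^s B^{2s}A^s}\leq\norm{AB}^{2s}$, and the claim follows on taking square roots.

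To drop the invertibility assumption I would apply the above with $A_\eta:=A+\eta I$ in place of $A$ and let $\eta\downarrow 0$. This passage to the limit is routine: $A_\eta$ commutes with $A$, so $\norm{A_\eta^s-A^s}=\sup_{t\in\sigma(A)}\abs{(t+\eta)^s-t^s}\to 0$ by uniform continuity of $t\mapsto t^s$ on a compact interval containing $\sigma(A)$, whence $\norm{A_\eta^s B^s}\to\norm{A^s B^s}$ and $\norm{A_\eta B}\to\norm{AB}$, and the inequality for $A_\eta$ passes to the inequality for $A$. I do not expect a genuine obstacle here: the whole argument rests only on the identity $\norm{XY}^2=\norm{XY^2X}$ for self-adjoint $Y$ and on the L\"owner--Heinz inequality, and the only points requiring minor care are this perturbation step and keeping track of the scalar factor $\norm{AB}^{2s}$ through the two conjugations. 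Alternatively, as the statement already indicates, one may simply cite Theorem~IX.2.1--2 of \cite{Bat97}.
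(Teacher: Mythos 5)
Your argument is correct, and it is genuinely different from what the paper does: the paper offers no proof at all, only a citation to Bhatia (Theorem IX.2.1--2) together with the unverified remark that the matrix proof ``applies as well to positive operators on a Hilbert space.'' Your self-contained derivation of the Cordes inequality is sound at every step: for invertible $A$ the chain $AB^2A=(BA)^*(BA)\leq \norm{AB}^2 I$, conjugation by $A^{-1}$, L\"owner--Heinz applied to $B^2\leq\norm{AB}^2A^{-2}$, conjugation by $A^s$, and the $C^*$-identity $\norm{A^sB^s}^2=\norm{A^sB^{2s}A^s}$ all hold for bounded positive operators on a Hilbert space (L\"owner--Heinz is valid in that generality), and the perturbation $A\mapsto A+\eta I$ with the uniform convergence of $(t+\eta)^s\to t^s$ on $\sigma(A)$ correctly removes the invertibility assumption. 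What your route buys is precisely what the paper's citation leaves implicit: a direct verification that the inequality holds in the infinite-dimensional setting actually needed, using only the operator monotonicity of $u\mapsto u^s$ that the paper already invokes for \eqref{eq:controlpowerop}; what the citation buys is brevity. Two cosmetic points: the identity you quote as $\norm{XY}^2=\norm{XY^2X}$ requires $X$ self-adjoint as well as $Y$ (it is $\norm{T}^2=\norm{TT^*}$ with $T=XY$, and $TT^*=XY^2X^*$), which is harmless here since you only apply it to powers of positive operators; and the degenerate case $\norm{AB}=0$ (forcing $B=0$ when $A$ is invertible) should be set aside before dividing, though it is trivial.
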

Note: this result is stated for positive matrices
in \cite{Bat97}, but it is easy to check that the proof applies
as well to positive operators on a Hilbert space.

\end{document}